\theoremstyle{plain}
\newtheorem{definition}{Definition}
\newtheorem{lemma}{Lemma}
\newtheorem{remark}{Remark}
\newtheorem{theorem}{Theorem}
\numberwithin{equation}{section}
\begin{document}

\title{Boundary Control method and De Branges spaces. Schr\"odinger equation, Dirac system and Discrete Schr\"odinger operator.}
\author{ A. S. Mikhaylov}
\address{St. Petersburg   Department   of   V.A. Steklov    Institute   of   Mathematics
of   the   Russian   Academy   of   Sciences, 7, Fontanka, 191023
St. Petersburg, Russia and Saint Petersburg State University,
St.Petersburg State University, 7/9 Universitetskaya nab., St.
Petersburg, 199034 Russia.} \email{a.mikhaylov@spbu.ru}

\author{ V. S. Mikhaylov}
\address{St.Petersburg   Department   of   V.A.Steklov    Institute   of   Mathematics
of   the   Russian   Academy   of   Sciences, 7, Fontanka, 191023
St. Petersburg, Russia and Saint Petersburg State University,
St.Petersburg State University, 7/9 Universitetskaya nab., St.
Petersburg, 199034 Russia.} \email{v.mikhaylov@spbu.ru}

\keywords{inverse problem, Boundary Control method, De Branges
method, Schr\"odinger operator, Dirac system, discrete
Schr\"odinger operator}

\maketitle

\noindent {\bf Abstract.} In the framework of the application of
the Boundary Control method to solving the inverse dynamical
problems for the one-dimensional Schr\"odinger and Dirac operators
on the half-line and semi-infinite discrete Schr\"odinger
operator, we establish the connections with the method of De
Branges: for each of the system we construct the De Branges space
and give a natural dynamical interpretation of all its
ingredients: the set of function the De Brange space consists of,
the scalar product, the reproducing kernel.

\section{Introduction}

In \cite{AM,BM_Sh} the authors attempted to look at different
approaches to inverse problems for one-dimensional systems from
one (dynamical) point of view. It happens that Gelfand-Levitan
\cite{GL}, Krein \cite{Kr2} , Simon \cite{BS1} and Remling
\cite{R1} equations can be derived within the framework of the
Boundary Control method. At the same time all the ingredients of
corresponding equations has their dynamical counterparts.

In \cite{R1,R2} the author answering the questions posed by Simon
in \cite{BS1,BSFG2}, used the De Branges method and De Branges
spaces. In \cite{AM} the authors have shown that the equations
derived by Remling \cite{R2} are in fact Krein equations and they
have clear dynamical interpretation. In the present paper we would
like to elaborate this observation: in fact the link between
Boundary Control method and De Branges method are much deeper. In
our approach we deal with the dynamical systems with boundary
control. Fixing time $T$ we take the set of the states of the
system at this time (the reachable set), taking the Fourier image
of this set we get the new space. We equip this space with the
norm generated by so-called \emph{connecting operator} to get a
Hilbert space of analytic functions. Then we construct the
reproducing kernel in this space by solving the Krein equation. We
develop this approach on the basis of three systems: Schrodinger
equation and Dirac system on the half-line and semi-infinite
discrete Schrodinger operator.

In the second section we provide all necessary information on De
Branges spaces following to \cite{R2} and \cite{RR}. In the third
section we deal with the Schr\"odinger operator on the half-line,
the forth and fifth sections are devoted to the Dirac operator on
the half line and the semi-infinite discrete Schr\"odinger
operator. For each operator we consider the dynamical settings of
the inverse problem, introduce the dynamical inverse data and
operators of the BC method. Then for each dynamical problem we
introduce special spaces which (as we prove) will be the De
Branges spaces.

\section{De Branges spaces}

Here we provide the information on De Branges spaces according to
\cite{R2,RR}. We call entire function $E:\mathbb{C}\mapsto
\mathbb{C}$ a \emph{Hermite-Biehler function} if
$|E(z)|>|E(\overline z)|$ for $z\in \mathbb{C}_+$. Let
$F^\#(z)=\overline{F(\overline{z})}$. The Hardy space $H_2$ is
defined by: $f\in H_2$ if $f$ is holomorphic in $\mathbb{C}^+$ and
$\sup_{y>0}\int_{-\infty}^\infty|f(x+iy)|^2\,dx<\infty$. Then De
Branges space $B(E)$ consists of entire functions such that:
\begin{equation*}
B(E):=\left\{F:\mathbb{C}\mapsto \mathbb{C},\,F \text{ entire},
\int_{\mathbb{R}}\left|\frac{F(\lambda)}{E(\lambda)}\right|^2\,d\lambda<\infty,\,\frac{F}{E},\frac{F^\#}{E}\in
H_2\right\}.
\end{equation*}
The space $B(E)$ with the scalar product
\begin{equation*}
[F,G]_{B(E)}=\frac{1}{\pi}\int_{\mathbb{R}}\overline{ F(\lambda)}
G(\lambda)\frac{d\lambda}{|E(\lambda)|^2}
\end{equation*}
is a Hilbert space. For any $z\in \mathbb{C}$ the
\emph{reproducing kernel} is introduced by
\begin{equation}
\label{repr_ker} J_z(\xi):=\frac{\overline{E(z)}E(\xi)-E(\overline
z)\overline{E(\overline \xi)}}{2i(\overline z-\xi)}
\end{equation}
Then
\begin{equation*}
F(z)=[J_z,F]_{B(E)}=\frac{1}{\pi}\int_{\mathbb{R}}\overline{J_z(\lambda)}
G(\lambda)\frac{d\lambda}{|E(\lambda)|^2}
\end{equation*}
We observe that a Hermite-Biehler function $E(\lambda)$ defines
$J_z$ by (\ref{repr_ker}). The converse is also true
\cite{DMcK,DBr}:
\begin{theorem}
\label{TeorDB} Let $X$ be a Hilbert space of entire functions with
reproducing kernel such that
\begin{itemize}
\item[1)] For any $\omega\in \mathbb{C}$ point evaluation is a
bounded functional, i.e. $|f(\omega)|\leqslant C\|f\|_X$.

\item[2)] if $f\in X$ then $f^\#\in X$ and $\|f\|_X=\|f^\#\|_X$

\item[3)] if $f\in X$ and $\omega\in \mathbb{C}$ such that
$f(\omega)=0$, then $\frac{z-\overline{\omega}}{z-\omega}f(z)\in
X$ and
$\left\|\frac{z-\overline{\omega}}{z-\omega}f(z)\right\|_{X}=\|f\|_{X}$.
\end{itemize}
then $X$ is a De Branges space based on the function
\begin{equation*}
E(z)=\sqrt{\pi}(1-iz)J_i(z)\|J_i\|_X^{-1}.
\end{equation*}
where $J_z$ is a reproducing kernel.
\end{theorem}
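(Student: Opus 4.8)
The plan is to show that any Hilbert space $X$ of entire functions satisfying conditions 1)--3) coincides, as a set and isometrically, with the De Branges space $B(E)$ built from the candidate function $E(z)=\sqrt{\pi}(1-iz)J_i(z)\|J_i\|_X^{-1}$. The first task is to verify that this $E$ is genuinely a Hermite--Biehler function, i.e.\ that $E$ is entire and $|E(z)|>|E(\overline z)|$ for $z\in\mathbb{C}_+$. Entirety is immediate since $J_i(z)$ is an entire function of $z$ (the reproducing kernel of a space of entire functions depends entirely on its argument). For the strict inequality I would work directly from the reproducing property: for $z\in\mathbb{C}_+$ one has
\begin{equation*}
\|J_z\|_X^2=[J_z,J_z]_X=J_z(z),
\end{equation*}
and a short computation using \eqref{repr_ker} together with condition 2) (which forces the kernel to satisfy the symmetry $J_z(\xi)=\overline{J_{\overline z}(\overline\xi)}$) expresses $|E(z)|^2-|E(\overline z)|^2$ as a positive multiple of $\|J_z\|_X^2$, which is strictly positive whenever $J_z\neq 0$. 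The nondegeneracy of the point evaluation from condition 1) guarantees $J_z\not\equiv 0$, so strict positivity holds and $E$ is Hermite--Biehler.

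Next I would check that the reproducing kernel $J_z$ of $X$ agrees with the kernel $\tfrac{1}{\pi}J^{B(E)}_z$ produced by formula \eqref{repr_ker} from this specific $E$. This is the computational heart of the argument: substituting $E(z)=\sqrt{\pi}(1-iz)J_i(z)\|J_i\|_X^{-1}$ into \eqref{repr_ker} and simplifying should reproduce, up to the normalizing constant, the abstract kernel $J_z$ of $X$. The key identity that makes this work is the de Branges structural formula relating values of the kernel at different points, which is exactly what conditions 2) and 3) encode: condition 3) (the isometric division by $\tfrac{z-\overline\omega}{z-\omega}$ at zeros) is what allows one to propagate knowledge of $J_i$ to all $J_z$ and thereby recover the two-point kernel from the single function $J_i$. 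I would carry out this verification by checking both sides agree as functions of $\xi$ for fixed $z$, using that both are determined by their reproducing property against a dense set.

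With the kernels identified, the final step is to conclude that $X=B(E)$ with equal norms. Since both spaces have the same reproducing kernel $J_z$, and a reproducing kernel Hilbert space is uniquely determined by its kernel, the two spaces coincide as sets of functions with identical inner products; the finite-dimensional case (where $E$ reduces to a polynomial) is handled by the same kernel-matching argument. I expect the main obstacle to be the middle step: verifying that conditions 2) and 3) force the abstract kernel $J_z$ to have precisely the rational two-point structure of \eqref{repr_ker}. This is the genuinely nontrivial part of de Branges' theory, where one must show that the axioms rigidly constrain the kernel; the Hermite--Biehler verification and the final RKHS uniqueness are comparatively routine once this structural identity is in hand. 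I would lean on the cited sources \cite{DMcK,DBr} for the detailed structural computation rather than reproduce it in full.
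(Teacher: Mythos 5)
The paper does not prove this theorem: it is quoted as a known result with a pointer to \cite{DMcK,DBr}, so there is no in-paper argument to compare you against, and I can only judge your outline on its own terms. Your architecture is the standard one for the classical proof --- build the candidate $E$ from $J_i$, show the abstract kernel of $X$ coincides with the kernel produced from that $E$ by \eqref{repr_ker}, then conclude $X=B(E)$ isometrically because a reproducing kernel determines its Hilbert space --- and you correctly locate the crux in the middle step.

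But that crux is exactly what is missing. The assertion that axioms 2) and 3) force the kernel to have the two-point rational structure of \eqref{repr_ker} \emph{is} the theorem; the Hermite--Biehler check and the RKHS uniqueness are, as you say yourself, routine once that identity is available, so deferring it wholesale to \cite{DMcK,DBr} leaves the proposal without independent mathematical content. To close the gap you would need the actual mechanism: polarize the isometry of axiom 3) applied to functions of the form $f-f(\omega)J_\omega/J_\omega(\omega)$, which vanish at $\omega$, to obtain a functional identity relating $J_z(\xi)$, $J_z(\omega)$, $J_\omega(\xi)$, $J_\omega(\omega)$; combine it with the symmetry $J_z(\xi)=\overline{J_{\overline z}(\overline\xi)}$ coming from axiom 2); and solve the resulting relation to exhibit real entire $A,B$ with $J_z(\xi)=\bigl(B(\xi)\overline{A(z)}-A(\xi)\overline{B(z)}\bigr)/(\xi-\overline z)$, which is \eqref{repr_ker} with $E=A-iB$, finally checking the normalization against $J_i$ to get the displayed formula for $E$. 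Two further slips: your first step verifies the Hermite--Biehler property by a computation that already uses \eqref{repr_ker} for the candidate $E$, i.e.\ it presupposes the kernel identity you only establish in the second step, so the argument must be reordered; and condition 1) gives only boundedness of point evaluations, not their nondegeneracy, so $J_z\not\equiv 0$ does not follow from 1) alone --- one needs $X\neq\{0\}$ together with an argument (via axiom 3) excluding common zeros of the space.
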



\section{Schr\"odinger equation on the half-line}

For the potential $q\in L_{1,\,loc}(\mathbb{R}_+)$ we consider the
Schr\"odinger operator on the half-line $H=-\partial^2_x+q$ on
$L_2(0,\infty)$ with Dirichlet boundary condition $\phi(0)=0$. For
$z\in \mathbb{C}$ consider the solution
\begin{equation}
\label{Schr} \left\{
\begin{array}l
-\varphi''(x)+q(x)\varphi(x)=z\varphi(x),\\
\varphi(0,z)=0,\,\, \varphi(0,z)=1.
\end{array}
\right.
\end{equation}
We fix $N\in \mathbb{R}_+$ and show that the function
$E(z):=\varphi(N,z)+i\varphi'(N,z)$ is a Hermite-Biehler function.
First we observe that $\varphi(N,\overline
z)=\overline{\varphi(N,z)}$ and $\varphi'(N,\overline
z)=\overline{\varphi'(N,z)}$, and consider (\ref{repr_ker}):
\begin{eqnarray}
J_z(\xi)=\frac{\left(\overline{\varphi(z)}-i\overline{\varphi'(z)}\right)\left(\varphi(\xi)+i\varphi'(\xi)\right)-\left(\varphi(\overline
z)+i\varphi'(\overline
z)\right)\overline{\left(\varphi(\overline\xi)+i\varphi'(\overline\xi)\right)}}
{2i(\overline z-\xi)}\notag\\
=\frac{\left(\overline{\varphi(z)}-i\overline{\varphi'(z)}\right)\left(\varphi(\xi)+i\varphi'(\xi)\right)-\left(\overline{\varphi(z)}+i\overline{\varphi'(z)}\right)\left(\varphi(\xi)-i\varphi'(\xi)\right)}{2i(\overline
z-\xi)}\notag\\
=\frac{-\overline{\varphi'(z)}\varphi(\xi)+\overline{\varphi(z)}\varphi'(\xi)}{\overline
z-\xi} \label{J_sch}
\end{eqnarray}
We take two points $z,\xi \in \mathbb{C}$ and consider
\begin{eqnarray*}
-\overline{\varphi''(x)}+q(x)\overline{\varphi(x)}=\overline
z\overline{\varphi(x)},\\
-\varphi''(x)+q(x)\varphi(x)=\xi\varphi(x)
\end{eqnarray*}
multiply the first equation by $u(\xi)$, multiply the second by
$\overline u(z)$ and subtract to get
\begin{equation*}
-\overline{\varphi''(z)}\varphi(\xi)+\varphi''(\xi)\overline{\varphi(z)}=(\overline
z-\xi)\overline{\varphi(z)}\varphi(\xi)
\end{equation*}
We integrate the above equality from $0$ to $N$ and integrate by
parts to get :
\begin{eqnarray}
(\overline
z-\xi)\int_0^N\overline{\varphi(x,z)}\varphi(x,\xi)\,dx=\left.\left[-\overline{\varphi'(x,z)}\varphi(x,\xi)+\varphi'(x,\xi)\overline{\varphi(x,z)}\right]\right|_{x=0}^{x=N}\notag\\
=-\overline{\varphi'(N,z)}\varphi(N,\xi)+\varphi'(N,\xi)\overline{\varphi(N,z)}\label{in_sch}
\end{eqnarray}
Comparing (\ref{J_sch}) and (\ref{in_sch}) we see that
\begin{equation*}
J_z(\xi)=\int_0^N\overline{\varphi(x,z)}\varphi(x,\xi)\,dx
\end{equation*}
Taking $\xi=z$:
\begin{equation}
\label{E_ineq}
0<\int_0^N\|\varphi(x,z)|^2\,dx=J_z(z)=\frac{\left|E(z)\right|^2-\left|E(\overline
z)\right|^2}{2i(-2\operatorname{Im}z)}=\frac{\left|E(z)\right|^2-\left|E(\overline
z)\right|^2}{4\operatorname{Im}z}
\end{equation}
which proves $E$ to be a a Hermite-Biehler function. Thus one can
define the De Branges space $\widehat B^N_S$ based on the function
$E$. The De Branges theory \cite{DBr} says that every De Branges
space corresponds to a certain canonical system, and provides the
procedure of recovering this system from the space (essentially
from the function $E$). So, once one have in hands the space
$\widehat B^N_S$, and we know that this space comes from
Schr\"odinger equation (special case of canonical system), it is
reasonable to pose question of recovering the potential $q$ (see
\cite{R1,R2}). Below we construct the De Branges space of
Schr\"odinger operator using the dynamical approach. And it will
be explained that "inverse problem", i.e. recovering of the system
from the De Branges space is equivalent to Boundary Control method
\cite{BIP07,AM,BM_Sh}.

It is known \cite{LeSa} that there exist a spectral measure
$d\rho(\lambda)$, such that for all $f,g\in L^2(\mathbb{R}_+)$ the
Parseval identity holds:
\begin{equation}
\int_0^\infty {f(x)}g(x)\,dx=\int_{-\infty}^\infty
{(Ff)(\lambda)}(Fg)(\lambda)\,d\rho(\lambda), \label{Fourier_int_1}\\
\end{equation}
where $F: L_2(\mathbb{R}_+)\mapsto L_{2,\,\rho}(\mathbb{R})$ is a
Fourier transformation:
\begin{eqnarray}
(Ff)(\lambda)=\int_0^\infty
f(x)\varphi(x,\lambda)\,dx\label{Fourier_int_2}\\
f(x)=\int_{-\infty}^\infty
(Ff)(\lambda)\varphi(x,\lambda)\,d\rho(\lambda)\notag.
\end{eqnarray}

For the same potential $q$ we consider the initial boundary value
problem for the 1d wave equation on the half line:
\begin{equation}
\label{wave_eqn} \left\{
\begin{array}l
u_{tt}(x,t)-u_{xx}(x,t)+q(x)u(x,t)=0, \quad x>0,\ t>0,\\
u(x,0)=u_t(x,0)=0,\ u(0,t)=f(t).
\end{array}
\right.
\end{equation}
where $f$ is an arbitrary $L^2_{loc}\left(
\mathbb{R}_{+},\mathbb{C}\right) $ function referred to as a
\emph{boundary control}. The following representation
\cite{AM,BM_Sh} for $u^f$ holds:
\begin{equation}
\label{wave_eqn_sol} u^f(x,t)=\left\{\begin{array}l
f(t-x)+\int_x^tw(x,s)f(t-s)\,ds, \quad x \leq t,\\
0, \quad x > t.\end{array}\right .
\end{equation}
Let $\mathcal{F}^T:=L^2(0,T;\mathbb{C})$ with the scalar product
$\left(f,g\right)_{\mathcal{F}^T}=\int_0^T
\overline{f(t)}{g(t)}\,dt$ be the outer space, the \emph{space of
controls}. The dynamical Dirichlet-to-Neumann map
$R^T:\mathcal{F}^T\mapsto \mathcal{F}^T$ for the system
(\ref{wave_eqn}) is defined by
\begin{equation}
\label{Response_sa} (R^Tf)(t)=u_{x}^f(0,t), \ t \in (0,T),
\end{equation}
with the domain $\{ f \in C^2([0,T];\mathbb{C}):\;
f(0)=f'(0)=0\}.$ According to (\ref{wave_eqn_sol}) it has a
representation
\begin{equation}
\label{react_rep} (R^Tf)(t)=-f'(t)+\int_0^tr(s)f(t-s)\,ds,\quad
r(s)=w_x(0,s).
\end{equation}
The wave, generated by (\ref{wave_eqn}) propagate with unite
velocity, that is why the natural setting of the dynamical inverse
problem \cite{AM,BM_Sh} is to recover $q(x),$ $x\in (0,T)$ from
$R^{2T},$ or what is equivalent, from $r(t),$ $t\in (0,2T).$

Introduce the inner space, the \emph{space of states}
$\mathcal{H}^T=L_2(0,T;\mathbb{C})$ with the scalar product
$\left(a,b\right)_{\mathcal{H}^T}=\int_0^T
\overline{a(t)}{b(t)}\,dt$ and a \emph{control operator}
\begin{equation*}
W^T: \mathcal{F}^T\mapsto \mathcal{H}^T,\quad W^Tf:=u^f(\cdot,T).
\end{equation*}
Notice that for the equation (\ref{wave_eqn}) it is natural to
consider \cite{AM,BM_Sh} the real controls (and, consequently, the
real space of states), but all the results are trivially
generalized to the case of complex $\mathcal{F}^T,$
$\mathcal{H}^T$. Everywhere below, unless it is mentioned, we
consider only real controls. The following statement is valid:
\begin{theorem}
Control operator $W^T$ is an isomorphism.
\end{theorem}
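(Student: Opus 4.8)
The plan is to show $W^T$ is bounded, injective, and surjective (equivalently, has dense range and bounded inverse). The natural route exploits the triangular (Volterra) structure of the representation formula (\ref{wave_eqn_sol}). First I would evaluate $W^Tf = u^f(\cdot, T)$ explicitly. Setting $x \leq T$ and writing $u^f(x,T) = f(T-x) + \int_x^T w(x,s) f(T-s)\,ds$, I would change variables via $\tau = T - s$ (and the state variable is $x$, running over $(0,T)$). This expresses $W^T$ as the identity-plus-Volterra-kernel operator
\begin{equation*}
(W^Tf)(x) = f(T-x) + \int_0^{T-x} w(x, T-\tau)\,f(\tau)\,d\tau,
\end{equation*}
so that $W^T$ is the composition of the isometric time-reversal $f(\cdot) \mapsto f(T - \cdot)$ on $L_2(0,T)$ with an operator of the form $I + K$, where $K$ is a Volterra integral operator with a locally bounded (continuous) kernel coming from $w$.

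The key steps are then: (i) verify that $w(x,s)$ is continuous (or at least that the kernel is Hilbert--Schmidt on the triangle $\{0 \leq x \leq s \leq T\}$), which follows from the representation (\ref{wave_eqn_sol}) and the standard regularity of the wave equation solution for $q \in L_{1,loc}$; (ii) conclude that $K$ is a bounded Volterra operator on $L_2(0,T)$, hence quasi-nilpotent with spectral radius zero; (iii) invoke the standard fact that $I + K$ is boundedly invertible with inverse $I + \widetilde K$ given by the convergent Neumann/Volterra series, where $\widetilde K$ is again a Volterra operator with a continuous kernel (the resolvent kernel). Since time-reversal is a unitary isomorphism of $L_2(0,T)$ onto itself, the composition $W^T$ inherits bounded invertibility, establishing that $W^T$ is an isomorphism from $\mathcal{F}^T$ onto $\mathcal{H}^T$.

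I would carry these out in the order: (1) derive the explicit kernel form of $W^T$ by the change of variables above; (2) identify $W^T = U(I+K)$ with $U$ the time-reversal unitary and $K$ Volterra; (3) establish the kernel regularity needed to make $K$ bounded; (4) apply the Volterra inversion theorem to get $(I+K)^{-1}$ bounded; (5) assemble $W^T$'s inverse as $(I+K)^{-1}U^{-1}$ and note both factors are bounded bijections.

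The main obstacle is step (3): controlling the regularity and boundedness of the kernel $w(x,s)$ (equivalently $r(s) = w_x(0,s)$) under the weak hypothesis $q \in L_{1,loc}(\mathbb{R}_+)$. For smooth $q$ the kernel $w$ is classical, but for merely locally integrable $q$ one must argue that $w$ is still continuous on the characteristic triangle, which is the content of the representation result cited from \cite{AM,BM_Sh}; granting that representation (which the excerpt permits), the remaining arguments are the standard Volterra inversion and are routine.
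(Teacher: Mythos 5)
Your argument is correct and is precisely the standard Boundary Control method proof: the paper itself gives no proof of this theorem, merely citing \cite{AM,BM_Sh}, and in those references the invertibility of $W^T$ is established exactly as you propose, by reading off from the representation (\ref{wave_eqn_sol}) that $W^T$ is a time-reversal unitary composed with $I+K$ for a Volterra operator $K$ whose kernel is continuous on the characteristic triangle. The only point requiring care, which you correctly flag, is the regularity of $w(x,s)$ for $q\in L_{1,loc}$, and that is supplied by the cited representation theorem.
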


The solution $u^f$ to (\ref{wave_eqn}) admits the spectral
representation \cite{AM} at fixed time $T$:
\begin{equation}
\label{wave_repr}
W^Tf:=u^f(x,T)=\int_{-\infty}^\infty\int_0^T\frac{\sin{\sqrt{\lambda}s}}{\sqrt{\lambda}}f(T-s)\,ds\,\varphi(x,\lambda)d\rho(\lambda)
\end{equation}
We take a Fourier transform (\ref{Fourier_int_2}) of a state
$u^f(\cdot,T),$ generated by a control $f$, for $\mu\in
\mathbb{R}$ we get:
\begin{eqnarray}
\label{DB_Sch}\widehat u^f(\mu,T)=\int_{-\infty}^\infty
u^f(x,T)\varphi(x,\mu)\,dx\\=\int_{-\infty}^\infty\int_{-\infty}^\infty\int_0^T\frac{\sin{\sqrt{\lambda}s}}{\sqrt{\lambda}}f(T-s)\,ds\,\varphi(x,\lambda)d\rho(\lambda)\varphi(x,\mu)\,dx
=\int_0^T\frac{\sin{\sqrt{\mu}s}}{\sqrt{\mu}}f(T-s)\,ds.\notag
\end{eqnarray}
Since $\int_{-\infty}^\infty u^f(x,T)\varphi(x,\mu)\,dx$ is
analytic in $\mathbb{C}$, we can continue $u^f(\mu,T)$ from
$\mathbb{R}$, to get
\begin{eqnarray}
\label{DB_Sch1}\widehat
u^f(\mu,T)=\int_0^T\frac{\sin{\sqrt{\mu}s}}{\sqrt{\mu}}f(T-s)\,ds,\quad
\mu\in \mathbb{C}.
\end{eqnarray}

We introduce the space of the Fourier images of states of the
dynamical system (\ref{wave_eqn}) at time $T$ (controls are real
here):
\begin{equation*}
B_S^T:=\left\{\widehat u^f(\mu,T)\,|\, f\in \mathcal{F}^T\right\}.
\end{equation*}
Which we put as a definition of De Brange space. Bearing in mind
(\ref{DB_Sch1}), we get
\begin{equation}
\label{DB2}
B_S^T=\left\{\int_0^T\frac{\sin{\sqrt{\mu}s}}{\sqrt{\mu}}f(T-s)\,ds\,\Big|\,
f\in \mathcal{F}^T\right\}
\end{equation}
In \cite{R1,R2}, the author have shown that $B^T_S$ (precisely
(\ref{DB2})) is a De Branges space. Our aim will be to show the
same using the dynamical approach.

We introduce the \emph{connecting operator}
$C^T:\mathcal{F}^T\mapsto \mathcal{F}^T$ using the quadratic form:
\begin{equation}
\label{CT_def}
\left(C^Tf,g\right)_{\mathcal{F}^T}=\left({W^Tf},W^Tg\right)_{\mathcal{H}^T},\quad
C^T=\left(W^T\right)^*W^T.
\end{equation}
The connecting operator is an isomorphism in $\mathcal{F}^T$,
\cite{AM,BM_Sh}.


We can evaluate using the Parseval identity (\ref{Fourier_int_1})
and definition of $C^T$:
\begin{equation*}
\left(C^Tf,g\right)_{\mathcal{F}^T}=\left({u^f(\cdot,T)},u^g(\cdot,T)\right)_{\mathcal{H}^T}=\int_{-\infty}^\infty
\overline{\widehat u^f(\mu,T)}\widehat u^g(\mu,T)\,d\rho(\mu)
\end{equation*}
then we use (\ref{DB_Sch}), which yields:
\begin{eqnarray}
\left(C^Tf,g\right)_{\mathcal{F}^T}= \int_{-\infty}^\infty
\overline{\int_0^T\frac{\sin{\sqrt{\mu}s}}{\sqrt{\mu}}f(T-s)\,ds}
\int_0^T\frac{\sin{\sqrt{\mu}t}}{\sqrt{\mu}}g(T-t)\,dt\,d\rho(\mu)\notag\\
= \int_{-\infty}^\infty \overline{F(\mu)}G(\mu)\,d\rho(\mu),\label{CT_Sh_spectr}\\
F(\mu)=\int_0^T\frac{\sin{\sqrt{\mu}s}}{\sqrt{\mu}}f(T-s)\,ds,\,\,
G(\mu)=\int_0^T\frac{\sin{\sqrt{\mu}s}}{\sqrt{\mu}}g(T-s)\,ds.\label{FG}
\end{eqnarray}
Then for the functions $F,G\in B^T_S$ having the representations
(\ref{FG}), we can introduce the scalar product in $B^T_S$ by
\begin{equation}
\label{Scal_Sh}
[F,G]_{B^T_S}:=\left(C^Tf,g\right)_{\mathcal{F}^T}.
\end{equation}
The fact that $C^T$ is an isomorphism implies that the space
$B^T_S$ equipped with the norm, generated by this scalar product
is a Hilbert space.

For positive $N$ we can prescribe a self-adjoint boundary
condition at $x=N:$
\begin{equation}
\label{Schr_D} \left\{
\begin{array}l
-\varphi''(x)+q(x)\varphi(x)=z\varphi(x),\\
\varphi(0,z)=0,\,\, \alpha\varphi(N,z)+\beta\varphi'(N,z)=0.
\end{array}
\right.
\end{equation}
The (discrete) measure corresponding to (\ref{Schr_D}) we denote
by $d\rho_N(\lambda)$.
\begin{remark}
Due to the finite speed of wave propagation in the dynamical
system (\ref{wave_eqn}), equal to one, in all formulaes starting
from (\ref{wave_repr}), we can substitute the measure
$d\rho(\lambda)$ by any measure $d\rho_N(\lambda)$ with
$N\geqslant T.$ And consequently,
\begin{equation*}
[F,G]_{B^T_S}= \int_{-\infty}^\infty
\overline{F(\mu)}G(\mu)\,d\rho(\mu)= \int_{-\infty}^\infty
\overline{F(\mu)}G(\mu)\,d\rho_N(\mu).
\end{equation*}
\end{remark}

It is a crucial fact in BC method that $C^T$ admits the
representation in terms of the inverse data \cite{AM,BM_Sh}:
\begin{theorem}
Control operator $C^T$ admits the representation in terms of the
dynamical data
\begin{equation}
\label{r-c2} ({C}^T  f)(t)=f(t)+\int_0^T c^T(t,s)f(s)\,ds\,, \
0<t<T\,,
\end{equation}
where
\begin{equation}
\label{c_t} c^T(t,s)=[p(2T-t-s)-p(t-s)],\quad
p(t):=\frac{1}{2}\int_0^{|t|} r\left(s\right)\,ds
\end{equation}
and spectral data:
\begin{equation*}
\left(C^Tf\right)(x)=\int_0^T C(x,y)f(y)\,dy,\,\,
C(x,y)=\int_{-\infty}^\infty
\frac{\sin{\sqrt{\mu}(T-x)}}{\sqrt{\mu}}\frac{\sin{\sqrt{\mu}(T-y)}}{\sqrt{\mu}}\,d\rho(\lambda),
\end{equation*}
where the action of generalized kernel C(x,y) is defined by
(\ref{CT_Sh_spectr}).
\end{theorem}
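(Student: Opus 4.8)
The plan is to prove the two representations separately, both starting from the definition $C^T=(W^T)^*W^T$ in (\ref{CT_def}), which gives $\left(C^Tf,g\right)_{\mathcal{F}^T}=\int_0^\infty u^f(x,T)u^g(x,T)\,dx$ (the integral being effectively over $(0,T)$ by finite speed of propagation). The spectral formula is the shorter of the two: I would begin from the identity (\ref{CT_Sh_spectr}) and change the integration variable $s\mapsto x=T-s$ in the expressions (\ref{FG}), turning them into $F(\mu)=\int_0^T\frac{\sin\sqrt{\mu}(T-x)}{\sqrt{\mu}}f(x)\,dx$ and likewise for $G$. Since we work with real controls and $\frac{\sin\sqrt{\mu}\,\cdot}{\sqrt{\mu}}$ is real on $\mathbb{R}$, interchanging the $\mu$-integration with the two $x,y$-integrations lets one read off the kernel $C(x,y)=\int_{-\infty}^\infty\frac{\sin\sqrt{\mu}(T-x)}{\sqrt{\mu}}\frac{\sin\sqrt{\mu}(T-y)}{\sqrt{\mu}}\,d\rho(\mu)$. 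The inner $\mu$-integral need not converge pointwise, so this interchange is understood in the sense of the quadratic form (\ref{CT_Sh_spectr}); this is precisely the meaning of the ``generalized kernel'' in the statement.

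For the dynamical formula I would use the Blagoveshchenskii approach. Introduce $\psi(\sigma,\tau):=\int_0^\infty u^f(x,\sigma)u^g(x,\tau)\,dx$, so that the quantity sought is $\psi(T,T)$. Differentiating twice and using the wave equation (\ref{wave_eqn}) together with integration by parts in $x$ (the spatial boundary terms at infinity vanish by finite speed of propagation, and those at $x=0$ are read off from (\ref{Response_sa}) as $u^f(0,\cdot)=f$, $u^f_x(0,\cdot)=R^{2T}f$), I would show that $\psi$ solves the inhomogeneous wave equation
\[
\psi_{\sigma\sigma}-\psi_{\tau\tau}=f(\sigma)(R^{2T}g)(\tau)-(R^{2T}f)(\sigma)g(\tau)
\]
with zero Cauchy data $\psi(0,\tau)=\psi_\sigma(0,\tau)=0$. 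Solving by the d'Alembert/Duhamel formula (with $\sigma$ as the evolution variable) and evaluating at $\sigma=\tau=T$ gives
\[
\psi(T,T)=\frac12\int_0^T\!\!\int_\sigma^{2T-\sigma}\bigl[f(\sigma)(R^{2T}g)(\tau)-(R^{2T}f)(\sigma)g(\tau)\bigr]\,d\tau\,d\sigma .
\]

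Finally I would substitute the convolution representation (\ref{react_rep}) of $R^{2T}$ into this double integral, extending the controls by zero past $T$. Integrating the terms carrying $-f'$ and $-g'$ by parts and invoking $f(0)=f'(0)=0$ produces exactly the diagonal contribution $(f,g)_{\mathcal{F}^T}$, i.e.\ the leading term $f(t)$ in (\ref{r-c2}); the remaining terms contain only $r$. After the substitutions $\xi=\sigma-\eta$ (resp.\ $\xi=\tau-\zeta$) in the convolutions and interchanging the order of integration, the inner integrals collapse to expressions of the form $\int_0^{|\cdot|}r=2p$, and assembling the contributions from the two orderings of $t$ and $s$ reconstructs the symmetric kernel $c^T(t,s)=p(2T-t-s)-p(t-s)$ of (\ref{c_t}) (using that $p$ is even). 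The technical heart, and the step I expect to require the most care, is the derivation of the Blagoveshchenskii identity together with the bookkeeping of boundary terms and of the integration regions in this last computation; the regularity needed to differentiate under the integral and to integrate by parts is available on the dense set of smooth controls with $f(0)=f'(0)=0$, and the formulas then extend to all of $\mathcal{F}^T$ since $C^T$ is bounded.
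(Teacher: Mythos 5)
The paper itself gives no proof of this theorem---it is imported from \cite{AM,BM_Sh}---and your argument reproduces essentially the derivation found there and in the displayed computation preceding the theorem: the spectral kernel is read off from (\ref{CT_Sh_spectr}) via the Parseval identity, and the dynamical kernel comes from the Blagoveshchenskii identity for $\psi(\sigma,\tau)=\int_0^\infty u^f(x,\sigma)u^g(x,\tau)\,dx$, d'Alembert's formula, and substitution of (\ref{react_rep}). Your computation is correct as outlined, including the signs in the inhomogeneity and the recovery of the diagonal term $(f,g)_{\mathcal{F}^T}$ from the $-f'$, $-g'$ parts of the response operator.
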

Let $J_z$ be the reproducing kernel in $B^T_S$, the latter means
that for all $F\in B^T_S$ the following should hold for $z\in
\mathbb{C}$:
\begin{equation}
\label{Point_ev} [J_z,F]_{B^T_S}=F(z).
\end{equation}
Let
$F(\mu)=\int_0^T\frac{\sin{\sqrt{\mu}s}}{\sqrt{\mu}}f(T-s)\,ds$.
We look for $J_z$ in the form:
\begin{equation}
\label{J_sch_repr}
J_z(\mu)=\int_0^T\frac{\sin{\sqrt{\mu}(T-s)}}{\sqrt{\mu}}j_z(s)\,ds.
\end{equation}
Evaluating l.h.s. of (\ref{Point_ev}) using (\ref{Scal_Sh}) and
r.h.s. of (\ref{Point_ev}) using representation of $F$ and fact
that $f$ is real, we arrive at
\begin{equation*}
\left(C^Tj_z,f\right)_{\mathcal{F}^T}=\int_0^T\frac{\sin{\sqrt{z}(T-s)}}{\sqrt{z}}f(s)\,ds,
\end{equation*}
which immediately yields the following Krein equation on $j_z:$
\begin{equation}
\label{CT_eq_sch}
\left(C^Tj_z\right)(s)=\overline{{\frac{\sin{\sqrt{z}(T-s)}}{\sqrt{z}}}},\quad
s\in (0,T).
\end{equation}
Notice that (\ref{CT_eq_sch}) has a unique solution due to the
fact that $C^T$ is an isomorphism.

Let us set up the \emph{special control problem}: for $z\in
\mathbb{C}$ to find a (complex-valued) control $f_z\in
L_2(0,T;\mathbb{C})$ such that $W^Tf_z=\varphi(x,z)$, $x\in
(0,T)$. Notice that only here we deal with complex-valued
controls.
\begin{lemma}
The solution of the special control problem can be found as a
unique solution to the Krein equation (\ref{CT_eq_sch}).
\end{lemma}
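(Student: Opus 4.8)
The plan is to reduce the special control problem to the Krein equation (\ref{CT_eq_sch}) by applying the adjoint $(W^T)^*$, and then to read off existence and uniqueness from the invertibility of $C^T$. Note first that solvability of the special control problem is not itself at issue: the function $\varphi(\cdot,z)$ is continuous on the compact interval $[0,T]$, hence belongs to $\mathcal{H}^T=L_2(0,T;\mathbb{C})$, and by the theorem stating that $W^T$ is an isomorphism there is exactly one $f_z\in L_2(0,T;\mathbb{C})$ with $W^Tf_z=\varphi(\cdot,z)$. What has to be shown is that this $f_z$ is the function produced by the Krein equation.

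The main computation is as follows. Applying $(W^T)^*$ to $W^Tf_z=\varphi(\cdot,z)$ and using $C^T=(W^T)^*W^T$ from (\ref{CT_def}) gives $C^Tf_z=(W^T)^*\varphi(\cdot,z)$. I would evaluate the right-hand side by pairing it with an arbitrary control $g$ and moving to the spectral side:
\begin{equation*}
\left((W^T)^*\varphi(\cdot,z),g\right)_{\mathcal{F}^T}=\left(\varphi(\cdot,z),u^g(\cdot,T)\right)_{\mathcal{H}^T}=\int_0^T\overline{\varphi(x,z)}\,u^g(x,T)\,dx.
\end{equation*}
Since $q$ is real and the Cauchy data in (\ref{Schr}) are real, one has $\overline{\varphi(x,z)}=\varphi(x,\overline z)$, so this integral is the Fourier transform (\ref{Fourier_int_2}) of the state $u^g(\cdot,T)$ evaluated at $\overline z$; by the analytic continuation (\ref{DB_Sch1}) it equals $\int_0^T\frac{\sin\sqrt{\overline z}(T-s)}{\sqrt{\overline z}}\,g(s)\,ds$. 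Because $\frac{\sin\sqrt{w}\,\tau}{\sqrt{w}}$ is entire in $w$ with real Taylor coefficients, $\frac{\sin\sqrt{\overline z}\,\tau}{\sqrt{\overline z}}=\overline{\frac{\sin\sqrt{z}\,\tau}{\sqrt{z}}}$. Comparing with $\left(C^Tf_z,g\right)_{\mathcal{F}^T}=\int_0^T\overline{(C^Tf_z)(s)}\,g(s)\,ds$, valid for every $g$ (whose images $W^Tg$ exhaust $\mathcal{H}^T$), then forces $(C^Tf_z)(s)=\frac{\sin\sqrt{z}(T-s)}{\sqrt{z}}$, which is exactly equation (\ref{CT_eq_sch}) read with $z$ replaced by $\overline z$, i.e. with the complex conjugate of its right-hand side.

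The conclusion is then immediate: $C^T$ is an isomorphism of $\mathcal{F}^T$, so the Krein equation (\ref{CT_eq_sch}) has a unique solution $j_z$, and running the previous computation backwards shows $W^Tj_z=\overline{\varphi(\cdot,z)}$, so that the control of the special control problem is recovered as $f_z=\overline{j_z}=j_{\overline z}$. The step I expect to be most delicate is precisely the evaluation of $(W^T)^*\varphi(\cdot,z)$: it rests on Parseval (\ref{Fourier_int_1}), on the legitimacy of continuing (\ref{DB_Sch1}) off the real axis, and above all on the conjugation bookkeeping $\overline{\varphi(x,z)}=\varphi(x,\overline z)$ together with the reality of the coefficients of $\frac{\sin\sqrt z\,\tau}{\sqrt z}$. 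It is exactly this conjugation that transforms the target $\varphi(\cdot,z)$ of the control problem into the complex-conjugate right-hand side of (\ref{CT_eq_sch}); tracking it correctly, and thereby identifying $f_z$ with $j_{\overline z}$, is the crux of the argument.
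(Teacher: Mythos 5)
Your argument follows essentially the same route as the paper's: you test the identity $W^Tf_z=\varphi(\cdot,z)$ against an arbitrary $W^Tg$ (equivalently, apply $(W^T)^*$ and use $C^T=(W^T)^*W^T$), and evaluate $\left(\varphi(\cdot,z),W^Tg\right)_{\mathcal{H}^T}$ through the spectral representation and the analytic continuation of the Fourier image of the state, which is exactly the chain (\ref{K1})--(\ref{K2}) in the paper. The one substantive difference is your conjugation bookkeeping: you correctly arrive at $(C^Tf_z)(s)=\frac{\sin{\sqrt{z}(T-s)}}{\sqrt{z}}$, i.e.\ the Krein equation (\ref{CT_eq_sch}) taken at $\overline z$, so that $f_z=j_{\overline z}=\overline{j_z}$, whereas the paper's last display drops the outer conjugation inherited from (\ref{K2}) and identifies $f_z$ with $j_z$ literally. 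Your version is the more accurate one --- it is consistent with the fact that $J_z(\xi)=\int_0^T\overline{\varphi(x,z)}\varphi(x,\xi)\,dx$, so the control $j_z$ steers the system to $\overline{\varphi(\cdot,z)}=\varphi(\cdot,\overline z)$ rather than to $\varphi(\cdot,z)$ --- and the final identification of the two unique solutions via the invertibility of $W^T$ and $C^T$ is exactly right.
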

\begin{proof}
We take the equality
\begin{equation*}
W^Tf_z=\varphi(x,z),\quad x\in (0,T),
\end{equation*}
and multiply it in $\mathcal{H}^T$ by $W^T g$, $g\in
\mathcal{F}^T$. As result we get that
\begin{equation}
\label{K1}
\left(W^Tf_z,W^Tg\right)_{\mathcal{H}^T}=\left(\varphi(\cdot,z),W^Tg\right)_{\mathcal{H}^T}=\overline{\int_0^T\overline{\left(W^Tg\right)(x)}\varphi(x,z)\,dx
}.
\end{equation}
The definition of $C^T$ (\ref{CT_def}) and spectral representation
(\ref{wave_repr}) transform (\ref{K1}) to:
\begin{equation}
\label{K2} \left(C^Tf_z,g\right)_{\mathcal{F}^T}=
\overline{\int_{0}^T\int_{-\infty}^\infty\int_0^T\overline{\frac{\sin{\sqrt{\lambda}s}}{\sqrt{\lambda}}g(T-s)}\,ds\,\varphi(x,\lambda)d\rho(\lambda)\varphi(x,z)\,dx},
\end{equation}
From (\ref{K2}) as in the proof of (\ref{DB_Sch}) we deduce that
\begin{equation*}
\left(C^Tf_z,g\right)_{\mathcal{F}^T}=\int_0^T\frac{\sin{\sqrt{z}(T-s)}}{\sqrt{z}}g(s)\,ds,
\end{equation*}
which proves the statement.
\end{proof}
We notice that initially the Krein equations were derived using
purely dynamical approach (see \cite{AM,BM_Sh}).

So, having constructed reproducing kernel $J_z(\lambda)$ from
Krein equation (\ref{CT_eq_sch}) and convolution formula
(\ref{J_sch_repr}), we can recover $E(\lambda)$ using Theorem
\ref{TeorDB}, all condition of which are clearly satisfied.

We show that the the fact that $E(\lambda)$ is a Hermite-Biehler
function follows from the positivity of $C^T$. Indeed, as it
follows from (\ref{E_ineq}),
\begin{eqnarray*}
\frac{\left|E(z)\right|^2-\left|E(\overline
z)\right|^2}{4\operatorname{Im}z}=J_z(z)=\int_0^T\frac{\sin{\sqrt{z}(T-s)}}{\sqrt{z}}j_z(s)\,ds\\
=\int_0^T\frac{\sin{\sqrt{z}(T-s)}}{\sqrt{z}}\left(\left(C^T\right)^{-1}\overline{{\frac{\sin{\sqrt{z}(T-\cdot)}}{\sqrt{z}}
}}\right)\left(s\right)\,ds\\
=\left(\left(\left(C^T\right)^{-1}\right)^*\overline{\frac{\sin{\sqrt{z}(T-s)}}{\sqrt{z}}},\overline{\frac{\sin{\sqrt{z}(T-s)}}{\sqrt{z}}}
\right)_{\mathcal{F}^T}>0,
\end{eqnarray*}
where the last inequality follows from the positivity of $C^T$.

If we know the De Branges space $B^T_S$, we can recover the
potential $q(x)$, $x\in (0,T)$ using the general theory of
canonical systems  \cite{DBr,RR,R1,R2}, or using the  Boundary
Control method (we need to know the operator $C^T$ only!). For the
details see \cite{AM,BM_Sh}.

\section{Dirac system on the half-line}

We consider the operator of the Dirac system on the half-line:
introduce the matrix $J:=\begin{pmatrix} 0&1\\-1&0\end{pmatrix}$
and  a matrix \emph{potential} $V=\begin{pmatrix} p&q\\q&-p
\end{pmatrix}$, $p,q\in C^1_{loc}(R_+)$. We set
$\mathcal{D}:=J+V$ on
$L_2(\mathbb{R}_{+},\mathbb{R}^2)\ni\Phi=\begin{pmatrix}\Phi_1\\\Phi_2\end{pmatrix}$
with Dirichlet condition $\Phi_1(0)=0$.

Let $\theta(x,z)=\begin{pmatrix}\theta^1\\ \theta^2\end{pmatrix}$
be the solution to the following Cauchy problem
\begin{equation}
\label{spec_sol} \left\{\begin{array}l
J\theta_x+V\theta=z\theta, \quad x>0, \\
\theta_1(0,z)=0, \quad \theta_2(0,z)=1.
\end{array}\right.
\end{equation}
We fix $N\in \mathbb{R}_+$ and show that
$E(z):=\theta^1(N,z)-i\theta^2(N,z)$ is a Hermite-Biehler
function. Let us evaluate (\ref{repr_ker}), counting that
$\overline{\theta(x,z)}=\theta(\overline z)$:
\begin{eqnarray}
J_z(\xi)=\frac{\left(\overline{\theta^1(z)}+i\overline
{\theta^2(z)}\right)\left(\theta^1(\xi)-i\theta^2(\xi)\right)-\left(\overline{\theta^1(z)}-i\overline
{\theta^2(z)}\right)\left(\theta^1(\xi)+i\theta^2(\xi)\right)}{2i(\overline
z-\xi)}\notag\\
=\frac{\overline{\theta^2(z)}\theta^1(\xi)-\overline{\theta^1(z)}\theta^2(\xi)}{\overline
z-\xi}\label{repr_dir}
\end{eqnarray}
We take points $z,\xi \in \mathbb{C}$ and consider
\begin{eqnarray*}
J\overline{\theta'(x,z)}+V(x)\overline{\theta(x,z)}=\overline{
z}\overline{\theta(x,z)},\\
J\theta'(x,\xi)+V(x)\theta(x,\xi)=\xi\theta(x,\xi).
\end{eqnarray*}
multiply the first equation by $\theta(\xi)$, multiply the second
by $\overline \theta(z)$ and subtract from the first to get
\begin{equation*}
\left(J\overline
\theta'(z),\theta(\xi)\right)_{\mathbb{R}^2}-\left(J\overline
\theta'(\xi),\overline{\theta(z)}\right)_{\mathbb{R}^2}=\left(\overline
z-\xi\right)\left(\overline
\theta(z),\theta(\xi)\right)_{\mathbb{R}^2}.
\end{equation*}
We integrate the latter equality from zero to $N$ and evaluate:
\begin{eqnarray*}
\left(\overline z-\xi\right)\int_{0}^N\left(\overline{
\theta(z)},\theta(\xi)\right)_{\mathbb{R}^2}\,dx=\int_0^N\left[\left(\overline
{\theta'_2(z)}\theta_1(\xi)-\overline
{\theta'_1(z)}\theta_2(\xi)\right)-\right.\\
\left.- \left( \theta'_2(\xi)\overline{\theta_1(z)}
-\theta'_1(\xi)\overline{\theta_2(z)}\right)\right]\,dx=\overline
{\theta_2(z)}\theta_1(\xi)-\overline
{\theta_1(z)}\theta_2(\xi)|_{x=N}.
\end{eqnarray*}
From here (see also (\ref{repr_dir})) follows that
\begin{equation*}
J_z(\xi)=\int_0^N\left(\overline{\theta(x,z)},\theta(x,\xi)\right)_{\mathbb{R}^2}\,dx.
\end{equation*}
Taking $\xi=z$:
\begin{equation*}
0<\int_0^N|\theta(x,z)|^2\,dx=J_z(z)=\frac{\left|E(z)\right|^2-\left|E(\overline
z)\right|^2}{2i(-2\operatorname{Im}z)}=\frac{\left|E(z)\right|^2-\left|E(\overline
z)\right|^2}{4\operatorname{Im}z},
\end{equation*}
which proves $E$ to be a a Hermite-Biehler function. By this
function one can construct the De Branges space $\widehat B^N_D$.
On the contrary, having in hands this space one can use the De
Branges technique \cite{DBr,RR} to recover the canonical system
(the Dirac system or the potential matrix $V(x)$) it comes from.
Below we use the dynamical approach to construct the De Branges
space for the Dirac system.

With a Dirac operator we associate the initial boundary-value
problem
\begin{equation}
\label{d1}
\left\{
\begin{array}l
iu_t+Ju_x+Vu=0,\,\,  x>0, \quad 0<t<T,\\
u\big|_{t=0}=0, \,\, x {\geqslant} 0, \\
u_1\big|_{x=0}=f,\,\,  0{\leqslant} t {\leqslant} T,
\end{array}
\right.
\end{equation}
where $T>0$ is a final moment; $f=f(t)$ is a complex-valued
function ({\it boundary control});
$u=u^f(x,t)=\begin{pmatrix} u_1(x,t)\\
u_2(x,t) \end{pmatrix}$ is a solution. We denote the outer space
of (\ref{d1}), the set of controls by
$\widetilde{\mathcal{F}}^T:=L_2\left((0,T);\mathbb{C}\right)$ with
the scalar product
$\left(f,g\right)_{\mathcal{F}^T}=\int_0^T\overline{f(t)}{g(t)}\,dx$.
In \cite{BM_Dir} the authors proved the following
\begin{theorem}
The solution to (\ref{d1}) admits the following representation:
\begin{equation}\label{repres u^f}
u^f(x,t)=f(t-x)\begin{pmatrix} 1\\
i \end{pmatrix}+\int_x^t w(x,s)f(t-s)\,ds\,, \qquad x {\geqslant}
0,\,\,0{\leqslant} t {\leqslant} T
\end{equation}
holds with $w=\begin{pmatrix} w_1\\
w_2\end{pmatrix}$ being a vector-kernel such that $w\big|_{t<x}=
0$, $w\big|_{\Delta^T} \in C^1(\Delta^T; {\mathbb C}^2)$, and
$w_1(0,\cdot)=0$.
\end{theorem}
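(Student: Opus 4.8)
The plan is to substitute the ansatz (\ref{repres u^f}) into problem (\ref{d1}), read off the equations that the kernel $w$ must satisfy, and then construct $w$ by solving the resulting characteristic (Goursat) boundary value problem by successive approximations. The choice of leading term is dictated by the principal part: rewriting the equation as $u_t=iJu_x+iVu$, one checks that $\begin{pmatrix}1\\i\end{pmatrix}$ is an eigenvector of $iJ$ with eigenvalue $-1$, equivalently $J\begin{pmatrix}1\\i\end{pmatrix}=i\begin{pmatrix}1\\i\end{pmatrix}$. Hence $f(t-x)\begin{pmatrix}1\\i\end{pmatrix}$ already solves the free ($V=0$) equation and carries the boundary singularity along the characteristic $x=t$ with unit speed; this is exactly why the $f'(t-x)$ terms will cancel upon substitution and why the remainder is smooth. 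The support statement $w|_{t<x}=0$ is built into the domain of the ansatz, since ahead of the wavefront $u^f\equiv0$.

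First I would insert (\ref{repres u^f}) into $iu_t+Ju_x+Vu=0$, differentiating the integral term (its upper limit $s=t$ produces the value $f(0)$) and integrating by parts in $s$ to convert $f'(t-s)$ into $f(t-s)$. Collecting the coefficients of the independent quantities $f'(t-x)$, $f(t-x)$, $f(0)$ and $\int_x^t(\cdot)f(t-s)\,ds$ and using $J\begin{pmatrix}1\\i\end{pmatrix}=i\begin{pmatrix}1\\i\end{pmatrix}$, I expect the $f'(t-x)$ and $f(0)$ contributions to drop out, leaving three conditions: the kernel equation $iw_s+Jw_x+V(x)w=0$ in the triangle $\Delta^T=\{0\le x\le s\le T\}$; a Goursat condition on the diagonal $s=x$ of the form $(iI-J)w(x,x)=-V(x)\begin{pmatrix}1\\i\end{pmatrix}$; and, from the boundary condition $u_1(0,t)=f(t)$, precisely $w_1(0,\cdot)=0$, since the first component of $\begin{pmatrix}1\\i\end{pmatrix}$ equals $1$.

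The delicate point is the diagonal condition, because $iI-J=\begin{pmatrix}i&-1\\1&i\end{pmatrix}$ is singular: its kernel is $\begin{pmatrix}1\\i\end{pmatrix}$ and its range is $\mathrm{span}\begin{pmatrix}1\\-i\end{pmatrix}$. I would therefore diagonalize $iJ$ through its eigenvectors $\begin{pmatrix}1\\-i\end{pmatrix}$ and $\begin{pmatrix}1\\i\end{pmatrix}$ (eigenvalues $+1$ and $-1$) and write $w=\alpha\begin{pmatrix}1\\-i\end{pmatrix}+\beta\begin{pmatrix}1\\i\end{pmatrix}$. Since $V(x)\begin{pmatrix}1\\i\end{pmatrix}=(p+iq)\begin{pmatrix}1\\-i\end{pmatrix}$ lies in the range of $iI-J$, the diagonal condition is solvable and fixes only the $\alpha$-component, explicitly $\alpha(x,x)=\tfrac12(ip-q)$, while $w_1(0,\cdot)=\alpha(0,\cdot)+\beta(0,\cdot)=0$ couples the two components at the boundary. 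In these variables the kernel equation splits into transport equations $\alpha_s-\alpha_x=(\cdots)$ and $\beta_s+\beta_x=(\cdots)$ coupled only through the zeroth-order term built from $V$: the component $\alpha$ propagates along $x+s=\mathrm{const}$ from its prescribed diagonal data, and $\beta$ propagates along $s-x=\mathrm{const}$ from the reflecting boundary condition $\beta(0,s)=-\alpha(0,s)$.

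Finally I would integrate each transport equation along its characteristics, obtaining a closed system of Volterra integral equations for $(\alpha,\beta)$ over the bounded triangle $\Delta^T$. Because the kernels are bounded and the integration is over a finite region, the successive-approximation scheme converges uniformly for every finite $T$, which yields existence and uniqueness; and since $V\in C^1_{loc}$, differentiating the integral equations and bootstrapping gives $w\in C^1(\Delta^T;\mathbb{C}^2)$. The main obstacle I anticipate is precisely the bookkeeping around the singular matrix $iI-J$: one must project onto the eigendirections $\begin{pmatrix}1\\-i\end{pmatrix}$ and $\begin{pmatrix}1\\i\end{pmatrix}$ to see which datum belongs on the characteristic diagonal and which on the boundary, verify the solvability $V(x)\begin{pmatrix}1\\i\end{pmatrix}\parallel\begin{pmatrix}1\\-i\end{pmatrix}$, and then organize the coupled Volterra system so that the iteration closes and the claimed $C^1$ regularity survives.
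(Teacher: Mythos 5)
The paper itself gives no proof of this theorem---it is quoted from the reference [BM\_Dir]---so there is no in-text argument to compare against; your derivation is the standard Goursat-problem construction for such representation formulas, and it is correct. Every computation you sketch checks out: $J\binom{1}{i}=i\binom{1}{i}$ makes the leading term a free right-moving wave so the $f'(t-x)$ and $f(0)$ contributions cancel; the diagonal condition $(iI-J)w(x,x)=-V(x)\binom{1}{i}$ is solvable because $V(x)\binom{1}{i}=(p+iq)\binom{1}{-i}$ lies in the range of the singular matrix $iI-J$, fixing $\alpha(x,x)=\tfrac12(ip-q)$ while the boundary condition supplies $\beta(0,\cdot)=-\alpha(0,\cdot)$; and since $V$ swaps the two eigendirections of $iJ$, the transport equations for $\alpha$ (along $x+s=\mathrm{const}$) and $\beta$ (along $s-x=\mathrm{const}$) are coupled only at zeroth order, so the characteristic integration yields a Volterra system on the bounded triangle $\Delta^T$ whose iteration converges and inherits $C^1$ regularity from $p,q\in C^1_{loc}$.
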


The \emph{response operator} $R^T:\widetilde{\mathcal{F}}^T\mapsto
\widetilde{\mathcal{F}}^T$ with the domain $\{f\in
C^2(0,T;C^2)\,|\, f(0)=0\}$, the analog of dynamical Dirichlet to
Neumann map is defined by
\begin{equation}
\label{Dirac_react} R^Tf:=u^f_2(0,t),\quad 0<t<T.
\end{equation}
from (\ref{repres u^f}) we deduce
\begin{equation}
\label{Direc_react} (Rf)(t)=if(t)+\int_0^tr(s)f(t-s)\,ds,\quad
r(s):=w_2(0,s).
\end{equation}
The speed of the wave propagation for (\ref{d1}) is equal to one,
so the natural set up of the dynamical inverse problem is to
recover $V(x)$, $x\in (0,T)$ from $R^{2T}$, or what is equivalent,
from $r(t),$ $t\in (0,2T)$.

For the vector functions $f,g\in L_2(\mathbb{R}_+, \mathbb{R}^2)$
define the Fourier transform (see \cite{LeSa})
\begin{eqnarray}
\left(F\begin{pmatrix}f_1\\
f_2\end{pmatrix}\right)(\lambda)=F(\lambda)=\int_0^\infty
f_1(x)\theta_1(x,\lambda)+f_2(x)\theta_2(x,\lambda)\,dx,\label{Dirac_FT}\\
\left(F\begin{pmatrix}g_1\\
g_2\end{pmatrix}\right)(\lambda)=G(\lambda)=\int_0^\infty
g_1(x)\theta_1(x,\lambda)+g_2(x)\theta_2(x,\lambda)\,dx\notag
\end{eqnarray}
Then there exist the measure $d\rho(\lambda)$ such that
\begin{eqnarray*}
f_1(x)=\int_{-\infty}^\infty
F(\lambda)\theta_1(x,\lambda)\,d\rho(\lambda),\,\,
f_2(x)=\int_{-\infty}^\infty F(\lambda)\theta_2(x,\lambda)\,d\rho(\lambda),\\
g_1(x)=\int_{-\infty}^\infty
G(\lambda)\theta_1(x,\lambda)\,d\rho(\lambda),\,\,
g_2(x)=\int_{-\infty}^\infty
G(\lambda)\theta_2(x,\lambda)\,d\rho(\lambda),
\end{eqnarray*}
and Parseval identity holds
\begin{eqnarray*}
\int_0^\infty f_1^2(x)+f_2^2(x)\,dx=\int_{-\infty}^\infty
F^2(\lambda)\,d\rho(\lambda),\\
\int_0^\infty f_1(x)g_1(x)+f_2(x)g_2(x)\,dx=\int_{-\infty}^\infty
F(\lambda)G(\lambda)\,d\rho(\lambda).
\end{eqnarray*}
The solution to (\ref{d1}) admits the spectral representation:
\begin{equation}
\label{Dirac_repr} u^f(x,t)=\int_{-\infty}^\infty\int_0^t
e^{i\lambda s}if(t-s)\,ds\,\theta(x,\lambda)\,d\rho(\lambda).
\end{equation}
We denote the set of states by $\mathcal{H}^T:=L_2
\left((0,T);\mathbb{C}^2\right)$ with an inner product
$(a,b)_{{{\mathcal H}^T}} := \int_{\Omega^T}\overline{a(x)} \cdot
 b(x)\,dx$, it is the \emph{inner space} of the system
(\ref{d1}). Thus for all $T>0$, $u^f(\cdot,T)\in \mathcal{H}^T$.

We define the \emph{control operator} $\widetilde W^T:
\widetilde{\mathcal{F}}^T\mapsto \mathcal{H}^T$ by
$\widetilde{W}^Tf:=u^f(\cdot,T)$ and observe (see \cite{BM_Dir})
that is not an isometry, as it easily follows from (\ref{repres
u^f}), $\widetilde{W}^T\widetilde{\mathcal F} ^T\not=
\mathcal{H}^T$. To "improve" the lack of the controllability, we
consider the auxiliary system
\begin{equation}
\label{Dir 1 opt v}
\left\{
\begin{array}l
iv_t-Jv_x-Vv=0, \,\,  0<x<T,\,\, 0<t<T\\
v\big|_{t=0}=0\,\, \\
v_1\big|_{x=0}=g,\,\, 0{\leqslant} t {\leqslant} T
\end{array}
\right.
\end{equation}
The solution $v=v^g(x,t)$ are connected with the solutions to
(\ref{d1}) by
\begin{equation*}
v^g(x,t)\,=\,\overline{u^{\bar g}(x,t)}\,.
\end{equation*}
Then we introduce the extended set of controls
$\mathcal{F}^T:=L_2((0,T);\mathbb{C}^2)$, and as a (extended)
state of Dirac system at the time $t=T$ we put
$u^f(\cdot,T)+v^g(\cdot,T)$. The new "extended" control operator
we define by $W^T:\mathcal{F}^T \mapsto \mathcal{H}^T$,
\begin{equation}
\label{W_fg} W^T\begin{pmatrix} f_1 \\
f_2\end{pmatrix}:=u^{f_1}(\cdot,T)+v^{f_2}(\cdot,T).
\end{equation}
The following statement is proved in \cite{BM_Dir}:
\begin{theorem}
The "extended" control operator $W^T$ is an isomorphism between
$\mathcal{F}^T$ and $\mathcal{H}^T$.
\end{theorem}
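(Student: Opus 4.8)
The plan is to make the triangular (Volterra) structure of $W^T$ explicit and then invert it by the standard argument that a boundedly invertible multiplication operator perturbed by a Volterra operator is again invertible. First I would substitute the representation (\ref{repres u^f}) together with the identity $v^{f_2}(\cdot,T)=\overline{u^{\overline{f_2}}(\cdot,T)}$ into the definition (\ref{W_fg}). Writing $w=\begin{pmatrix} w_1\\ w_2\end{pmatrix}$ and evaluating at $t=T$ yields
\begin{equation*}
\left(W^T\begin{pmatrix} f_1\\ f_2\end{pmatrix}\right)(x)
=\begin{pmatrix} 1&1\\ i&-i\end{pmatrix}\begin{pmatrix} f_1(T-x)\\ f_2(T-x)\end{pmatrix}
+\int_x^T\begin{pmatrix} w_1&\overline{w_1}\\ w_2&\overline{w_2}\end{pmatrix}(x,s)\begin{pmatrix} f_1(T-s)\\ f_2(T-s)\end{pmatrix}\,ds,
\end{equation*}
where the $\begin{pmatrix} 1\\ i\end{pmatrix}$-column originates from $u^{f_1}$ and the $\begin{pmatrix} 1\\ -i\end{pmatrix}$-column from $v^{f_2}=\overline{u^{\overline{f_2}}}$. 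Since $w|_{t<x}=0$, the state vanishes for $x>T$, so the range lies in $\mathcal{H}^T$ as required.

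Next, using the isometric reversal $R:\mathcal{H}^T\to\mathcal{H}^T$, $(Ra)(\tau)=a(T-\tau)$, which is an isomorphism, I would bring $RW^T$ to the normal form
\begin{equation*}
\left(RW^TF\right)(\tau)=MF(\tau)+\int_0^\tau\mathcal{K}(\tau,\sigma)F(\sigma)\,d\sigma,
\qquad M=\begin{pmatrix} 1&1\\ i&-i\end{pmatrix},
\end{equation*}
with $\mathcal{K}(\tau,\sigma)=\begin{pmatrix} w_1&\overline{w_1}\\ w_2&\overline{w_2}\end{pmatrix}(T-\tau,T-\sigma)$ continuous on the triangle $0\le\sigma\le\tau\le T$ thanks to the regularity $w|_{\Delta^T}\in C^1$ established in the previous theorem. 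The decisive pointwise fact is $\det M=-2i\neq0$, whence the multiplication operator $\mathcal{M}:F\mapsto MF$ is a bounded isomorphism of $L_2((0,T);\mathbb{C}^2)$ with bounded inverse $F\mapsto M^{-1}F$.

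Finally I would factor $RW^T=\mathcal{M}+\mathcal{V}=\mathcal{M}(I+\mathcal{M}^{-1}\mathcal{V})$, where $\mathcal{V}$ is the integral operator with kernel $\mathcal{K}$. Since $\mathcal{M}^{-1}$ is a constant-matrix multiplication, $\mathcal{M}^{-1}\mathcal{V}$ is again a Volterra operator with continuous kernel, hence compact and quasinilpotent; its spectrum is $\{0\}$, so $I+\mathcal{M}^{-1}\mathcal{V}$ is boundedly invertible via the Neumann series $\sum_{k\ge0}(-\mathcal{M}^{-1}\mathcal{V})^k$, whose iterated kernels decay factorially. Therefore $RW^T$, and with it $W^T$, is a bounded bijection with bounded inverse, i.e. an isomorphism.

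The main obstacle I anticipate is the bookkeeping of the combined kernel: one must verify that assembling the contributions of $u^{f_1}$ and $v^{f_2}$ produces a single kernel strictly supported on $\{\sigma\le\tau\}$, so that $\mathcal{V}$ is genuinely Volterra rather than a general integral operator, and that the $C^1$ regularity survives complex conjugation and the change of variables. Once the strict triangular structure and the invertibility of the constant matrix $M$ are in place, surjectivity and boundedness of the inverse are automatic, and the positivity of $(W^T)^*W^T$ follows as an immediate corollary.
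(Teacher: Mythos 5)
Your argument is correct: writing $W^T$ as the constant matrix $M=\begin{pmatrix}1&1\\ i&-i\end{pmatrix}$ (with $\det M=-2i\neq 0$) acting on the time-reversed control plus a Volterra operator with continuous kernel inherited from $w$ and $\overline{w}$, and then inverting $I+\mathcal{M}^{-1}\mathcal{V}$ by the Neumann series, is exactly the standard BC-method proof. The paper itself does not prove this theorem but defers to the cited reference \cite{BM_Dir}, where the argument is essentially the one you give, so no gap to report.
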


The spectral representation of $v^g$ is
\begin{equation}
\label{Dirac_repr_1} v^g(x,t)=\int_{-\infty}^\infty\int_0^t
e^{-i\lambda s}(-i) g(t-s)\,ds\,\theta(x,\lambda)\,d\rho(\lambda).
\end{equation}
Taking the the Fourier transform (\ref{Dirac_FT}) of
$u^f(\cdot,T)$ and $v^g(\cdot,T)$ for $\lambda\in \mathbb{R}$ we
get respectively:
\begin{eqnarray}
\left(Fu^f(\cdot,T)\right)(\lambda)=\int_0^T e^{i\lambda s}if(T-s)\,ds,\label{Fur_dir1}\\
\left(Fv^g(\cdot,T)\right)(\lambda)=-\int_0^T e^{-i\lambda s}i
g(T-s)\,ds. \label{Fur_dir2}
\end{eqnarray}
The \emph{connecting operator} $C^T: \mathcal{F}^T\mapsto
\mathcal{F}^T$ is defined by the quadratic form
\begin{equation*}
\left(C^T\begin{pmatrix} f_1 \\
f_2\end{pmatrix},\begin{pmatrix} g_1 \\
g_2\end{pmatrix}\right)_{\mathcal{F}^T}=\left(W^T\begin{pmatrix} f_1 \\
f_2\end{pmatrix},W^T\begin{pmatrix} g_1 \\
g_2\end{pmatrix}\right)_{\mathcal{H}^T}.
\end{equation*}
Notice that $C^T$ is positive isomorphism in $\mathcal{F}^T$, see
\cite{BM_Dir}.

We can evaluate making use of (\ref{Dirac_repr}),
(\ref{Dirac_repr_1}), (\ref{Fur_dir1}), (\ref{Fur_dir2}) and
Parseval identity:
\begin{eqnarray}
\label{CT_dir_sp} \left(C^T\begin{pmatrix} f_1 \\
f_2\end{pmatrix},\begin{pmatrix} g_1 \\
g_2\end{pmatrix}\right)_{\mathcal{F}^T}
=\int_{-\infty}^\infty \overline{\left(FW^T
\begin{pmatrix} f_1 \\ f_2\end{pmatrix}\right)}{\left(FW^T
\begin{pmatrix} g_1 \\ g_2\end{pmatrix}\right)}\,d\rho(\lambda)
\\=\int_{-\infty}^\infty
\overline{\int_0^T \begin{pmatrix} ie^{i\lambda(T-s)} \\
-ie^{-i\lambda(T-s)}\end{pmatrix}\begin{pmatrix} f_1(s) \\ f_2(s)\end{pmatrix}\,ds}{\int_0^T \begin{pmatrix} ie^{i\lambda(T-t)} \\
-ie^{-i\lambda(T-t)}\end{pmatrix}\begin{pmatrix} g_1(t) \\
 g_2(t)\end{pmatrix}\,ds}\, d\rho(\lambda).\notag
\end{eqnarray}
The important fact proved in \cite{BM_Dir} that $C^T$ admits a
representation in terms of inverse data:
\begin{theorem}
The control operator is represented in terms of inverse dynamial
data:
\begin{align}
\label{CT beta via R2T alpha} & \left(C^T a\right)(t)=2
a(t)+\int_0^Tc^T(t,s) a(s)\,ds\,, \qquad 0 {\leqslant} t
{\leqslant} T
\end{align}
where $c^T(t,s)$ is a matrix kernel with the elements
\begin{align}
\notag & c_{11}(t,s)(t)=-i\,[r(t-s)-\bar r(s-t)]\,, \quad
c_{12}(t,s)=-i\,\bar r(2T-t-s)\,,\\
& c_{21}(t,s)= i\,r(2T-t-s)\,,\quad c_{22}(t,s)(t)=i\,[\bar
r(t-s)-r(s-t)]\label{kernel c in detail in forward problem}\,,
\end{align}
and in terms of inverse spectral data:
\begin{equation}
\left(C^T\begin{pmatrix} f_1\\f_2\end{pmatrix}\right)(x)=\int_0^T
\left(C(x,y)\begin{pmatrix} f_1(y)\\f_2(y)\end{pmatrix}\right)\,dy
\end{equation}
where the generalized kernel of $C^T$ is given by
\begin{equation*}
C(t,s)=\int_{-\infty}^\infty \overline{c(s,\lambda)}\otimes{
c^T(t,\lambda)} \,d\rho(\lambda),\quad c(s,\lambda)=\begin{pmatrix} ie^{i\lambda(T-s)} \\
-ie^{-i\lambda(T-s)}\end{pmatrix}.
\end{equation*}
and action is given by the r.h.s. of (\ref{CT_dir_sp}).
\end{theorem}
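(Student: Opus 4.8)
The plan is to prove the two representations separately; the spectral one is essentially a restatement, while the dynamical one carries all the work. The spectral representation is immediate: the quadratic form (\ref{CT_dir_sp}) has already been computed via Parseval's identity, and writing its integrand as the bilinear pairing of the two column vectors $c(s,\lambda)=\begin{pmatrix} ie^{i\lambda(T-s)} \\ -ie^{-i\lambda(T-s)}\end{pmatrix}$ and $c(t,\lambda)$ one simply reads off that the generalized kernel of $C^T$ is $C(t,s)=\int_{-\infty}^\infty \overline{c(s,\lambda)}\otimes c(t,\lambda)\,d\rho(\lambda)$; no further argument is required. For the dynamical representation I would begin from the definition $\left(C^T f,g\right)_{\mathcal{F}^T}=\left(W^Tf,W^Tg\right)_{\mathcal{H}^T}$ and expand it, using (\ref{W_fg}), into the four inner products of states $\left(u^{f_1},u^{g_1}\right)$, $\left(u^{f_1},v^{g_2}\right)$, $\left(v^{f_2},u^{g_1}\right)$, $\left(v^{f_2},v^{g_2}\right)$ at time $T$. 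These four blocks are precisely the four entries of the matrix kernel $c^T$, so it suffices to identify each one.

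The engine for the two diagonal blocks is a conservation law of Blagoveshchensky type. Using that $J$ is antisymmetric and $V$ symmetric, any two solutions $u,w$ of the equation in (\ref{d1}) satisfy $\partial_t\left(\overline u\cdot w\right)=i\,\partial_x\left(\overline u\cdot Jw\right)$, and two solutions of the auxiliary equation in (\ref{Dir 1 opt v}) satisfy the analogous identity with the opposite sign. Integrating over $(x,t)\in(0,\infty)\times(0,T)$, the contribution at $x=\infty$ vanishes by the finite speed of propagation, while the contribution at $x=0$ is a boundary integral which, through the boundary conditions $u_1(0,t)=f$, $u_2(0,t)=(R^Tf)(t)$ and the representation (\ref{Direc_react}), becomes an explicit integral of $r$.

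Substituting $R^Tf=if+r*f$ one finds that the leading term $if$ produces the diagonal identity part $2a(t)$, while the convolution with $r$ produces exactly $c_{11}(t,s)=-i\,[r(t-s)-\bar r(s-t)]$; the $vv$ block is treated identically, its response kernel being $-ig+\bar r*g$ (obtained from $v^g=\overline{u^{\bar g}}$), and it yields $c_{22}(t,s)=i\,[\bar r(t-s)-r(s-t)]$. The hard part is the two off-diagonal blocks, which give $c_{12}$ and $c_{21}$. Here the conservation-law machinery breaks down: writing $\overline{u^{f_1}}=v^{\bar f_1}$ and $\overline{v^{f_2}}=u^{\bar f_2}$, each cross block becomes a bilinear pairing of two solutions of the \emph{same} type, for which neither identity above is a perfect $x$-derivative, because the potential terms no longer cancel.

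I would therefore compute these two blocks directly from the representation (\ref{repres u^f}). The leading (delta) parts drop out, since the constant vectors $\begin{pmatrix}1\\ -i\end{pmatrix}$ appearing in $\overline{u^{f_1}}$ and in $v^{g_2}$ have zero dot product, so only the terms containing the kernel $w$ survive, and these must be reduced to the boundary trace $r=w_2(0,\cdot)$. This reduction is the crux: one exploits the Goursat problem that $w$ satisfies in the characteristic triangle together with the boundary datum $w_1(0,\cdot)=0$ from (\ref{repres u^f}), so that integration along characteristics transports the interior values of $w$ to the boundary $x=0$; the reflection at $x=0$ is exactly what turns the difference arguments of the diagonal blocks into the sum argument $2T-t-s$, producing $c_{12}(t,s)=-i\,\bar r(2T-t-s)$ and $c_{21}(t,s)=i\,r(2T-t-s)$. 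I expect the bookkeeping of this characteristic reduction --- matching signs, conjugations, and the factor $i$ coming from the vector $\begin{pmatrix}1\\ i\end{pmatrix}$ --- to be the main technical obstacle, all remaining steps being routine.
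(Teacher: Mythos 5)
First, a point of reference: the paper does not actually prove this theorem --- it is quoted from \cite{BM_Dir}, and the only part derived in the text is the spectral representation, which is exactly the Parseval computation (\ref{CT_dir_sp}) that you correctly identify as ``essentially a restatement.'' Your treatment of the spectral part and of the two diagonal blocks is sound: the identity $\partial_t(\overline u\cdot w)=i\,\partial_x(\overline u\cdot Jw)$ does hold for two solutions of (\ref{d1}) (the $V$-terms cancel by symmetry, the $J$-terms combine by antisymmetry), the boundary term at $x=0$ is expressed through $R^T$, and the leading term $if$ of (\ref{Direc_react}) produces the summand $2a(t)$ while the convolution part produces $c_{11}$ and $c_{22}$ as claimed.

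The genuine gap is in the off-diagonal blocks, which you yourself call the crux and then do not prove. Your stated reason for abandoning the conservation-law route there is incorrect: it only fails for the \emph{equal-time} pairing. If instead you introduce the two-parameter Blagoveshchensky function
\begin{equation*}
h(s,t)=\bigl(u^{f_1}(\cdot,s),v^{g_2}(\cdot,t)\bigr)_{\mathcal{H}},
\end{equation*}
then $(\partial_s-\partial_t)h$ kills the potential terms exactly because $V$ is symmetric (they enter $\partial_s h$ and $\partial_t h$ with the same sign and subtract out), and what remains is the perfect $x$-derivative $i\,\partial_x(\overline u\cdot Jv)$, hence a pure boundary term $-i\,\overline{u(0,s)}\cdot Jv(0,t)$ expressible through $f_1$, $g_2$ and the response kernel. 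Integrating along the characteristic $s+t=2T$ from $(0,2T)$, where $h$ vanishes, down to $(T,T)$ is precisely what produces the argument $2T-t-s$ and explains why the response is needed on the doubled interval $(0,2T)$. Your proposed substitute --- reducing the interior Goursat kernel $w(x,s)$ of (\ref{repres u^f}) to its boundary trace $r=w_2(0,\cdot)$ by ``integration along characteristics'' --- is only a heuristic: the cross terms involve $w$ throughout the characteristic triangle, the Goursat data $w_{1}(0,\cdot)=0$ alone does not determine those interior values in terms of $r$ without effectively re-solving the forward problem, and none of the sign/conjugation bookkeeping you defer is carried out. As written, $c_{12}$ and $c_{21}$ --- the only nontrivial entries --- are asserted rather than derived.
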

Since $\left(Fu^f(\cdot,T)\right)(\lambda),$
$\left(Fv^g(\cdot,T)\right)(\lambda)$ are analytic in
$\mathbb{C}$, and on real line are given by (\ref{Fur_dir1}),
(\ref{Fur_dir2}) it follows that the Fourier transform of
"extended" state at time $t=T$ can be analytically continued on
$\mathbb{C}$ by the formula
\begin{equation}
\label{WT_Fur_Dir}
\left(FW^T\begin{pmatrix}f_1 \\
f_2\end{pmatrix}\right)(\lambda)=\int_0^T\left(i\begin{pmatrix}e^{i\lambda
s}\\-e^{-i\lambda
s}\end{pmatrix},\begin{pmatrix}f_1(T-s)\\f_2(T-s)\end{pmatrix}\right)\,ds,\quad
\lambda\in \mathbb{C}.
\end{equation}

We introduce the Be Branges space associated to Dirac system as a
set of Fourier transforms of of the (extended) states of the
system (\ref{d1}) at the moment $T$:
\begin{equation*}
B^T_D:=\left\{ F(\lambda)=\left(FW^T\begin{pmatrix}f_1 \\
f_2\end{pmatrix}\right)(\lambda)\Bigl|
\begin{pmatrix}f_1\\f_2\end{pmatrix}\in \mathcal{F}^T\right\}
\end{equation*}
The relation (\ref{WT_Fur_Dir}) implies that
\begin{equation}
\label{DB_Dir} B^T_D=\left\{
\int_0^T\left(i\begin{pmatrix}e^{i\lambda s}\\-e^{-i\lambda
s}\end{pmatrix},\begin{pmatrix}f_1(T-s)\\f_2(T-s)\end{pmatrix}\right)\,ds\Bigl|
\begin{pmatrix}f_1\\f_2\end{pmatrix}\in \mathcal{F}^T\right\}
\end{equation}
In $B^T_D$ we introduce the scalar product by
\begin{equation*}
[F,G]_{B^T_D}:=\left(C^T\begin{pmatrix}f_1 \\
f_2\end{pmatrix},\begin{pmatrix}g_1 \\
g_2\end{pmatrix}\right)_{\mathcal{F}^T},\quad F,G\in B^T_D.
\end{equation*}
According to (\ref{CT_dir_sp}):
\begin{eqnarray*}
[F,G]_{B^T_D}:=\int_{-\infty}^\infty
\overline{\int_0^T \begin{pmatrix} ie^{i\lambda(T-s)} \\
-ie^{-i\lambda(T-s)}\end{pmatrix}\begin{pmatrix} f_1(s) \\ f_2(s)\end{pmatrix}\,ds}{\int_0^T \begin{pmatrix} ie^{i\lambda(T-t)} \\
-ie^{-i\lambda(T-t)}\end{pmatrix}\begin{pmatrix} g_1(t) \\
 g_2(t)\end{pmatrix}\,ds}\, d\rho(\lambda)\\
 F(\lambda)=\int_0^T \begin{pmatrix} ie^{i\lambda(T-s)} \\
-ie^{-i\lambda(T-s)}\end{pmatrix}\begin{pmatrix} f_1(s) \\
f_2(s)\end{pmatrix}\,ds,\,\, G(\lambda)=\int_0^T \begin{pmatrix} ie^{i\lambda(T-t)} \\
-ie^{-i\lambda(T-t)}\end{pmatrix}\begin{pmatrix} g_1(t) \\
 g_2(t)\end{pmatrix}\,ds.
\end{eqnarray*}
Since $C^T$ is a positive isomorphism in $\mathcal{F}^T$, the
space $B^T_D$ with the norm generated by $[,]_{B^T_D}$ is a
Hilbert space.
Let $J_z(\lambda)$ be the reproducing kernel in $B^T_D$, the
latter means that
\begin{equation}
\label{Point_ev_D} [J_z,F]_{B^T_D}=F(z),\quad \forall F\in B^T_D.
\end{equation}
We will look for $J_z$ in the form:
\begin{equation}
\label{J_Dir_for}
J_z(\lambda)=\int_0^T \begin{pmatrix} ie^{i\lambda(T-s)} \\
-ie^{-i\lambda(T-s)}\end{pmatrix}\begin{pmatrix} j^z_1(s) \\
j^z_2(s)\end{pmatrix}\,ds,
\end{equation}
then from (\ref{Point_ev_D}) and definition of the scalar product
we deduce
\begin{equation*}
[J_z,F]_{B^T_D}=\left(C^T\begin{pmatrix}j^z_1 \\
j^z_2\end{pmatrix},\begin{pmatrix}f_1 \\
f_2\end{pmatrix}\right)_{\mathcal{F}^T}=F(z)=\int_0^T \begin{pmatrix} ie^{iz(T-s)} \\
-ie^{-iz(T-s)}\end{pmatrix}\begin{pmatrix} f_1(s) \\
f_2(s)\end{pmatrix}\,ds,
\end{equation*}
from where due to the arbitrariness of $f_1,f_2$ we arrive at the
following equation on $\begin{pmatrix}j^z_1\\j^z_2\end{pmatrix}$:
\begin{equation}
\label{Gener_ker2}
C^T\begin{pmatrix}j^z_1 \\
j^z_2\end{pmatrix}=\overline{\begin{pmatrix}ie^{i\lambda (T-s)}\\
-ie^{-i\lambda (T-s)}\end{pmatrix}}, \quad 0\leqslant t\leqslant
T.
\end{equation}
We emphasize that equation (\ref{Gener_ker2}) is Krein equations
and can be used for solving the inverse problem of the recovering
potential from the dynamical (or spectral) inverse data.

Let us show that $\begin{pmatrix}j^z_1\\ j^z_2\end{pmatrix}$ is a
solution to the following \emph{special control problem}.  We fix
$z\in \mathbb{C}$ and consider the control problem to find
$\begin{pmatrix}f^z_1\\ f^z_2\end{pmatrix}\in \mathcal{F}^T$ such
that
\begin{equation}
\label{Control_Dir}
W^T\begin{pmatrix}f_1^z\\
f_2^z\end{pmatrix}(\cdot,T)=\theta(\cdot,z),\quad \text{on
$(0,T)$}.
\end{equation}
Since $W^T$ is boundedly invertible, such a control $\begin{pmatrix}f_1^z\\
f_2^z\end{pmatrix}$ exists.
\begin{lemma}
The solution of the special control problem (\ref{Control_Dir})
can be found as a unique solution to the Krein equation
(\ref{Gener_ker2}).
\end{lemma}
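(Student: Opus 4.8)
The plan is to mirror the argument used for the Schr\"odinger lemma above: I would show that the density $\begin{pmatrix}f_1^z\\f_2^z\end{pmatrix}$ of the special control satisfies the Krein equation (\ref{Gener_ker2}) in weak form, tested against every $\begin{pmatrix}g_1\\g_2\end{pmatrix}\in\mathcal{F}^T$, and then invoke the invertibility of $C^T$ to upgrade this to the strong equation and to uniqueness. Existence of the control is not an issue, since $W^T$ is an isomorphism, so I only have to identify it.

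First I would start from the defining relation (\ref{Control_Dir}), $W^T\begin{pmatrix}f_1^z\\f_2^z\end{pmatrix}=\theta(\cdot,z)$ on $(0,T)$, and pair it in $\mathcal{H}^T$ with $W^T\begin{pmatrix}g_1\\g_2\end{pmatrix}$ for an arbitrary control $\begin{pmatrix}g_1\\g_2\end{pmatrix}\in\mathcal{F}^T$. By the very definition of the connecting operator, $\bigl(W^Tf^z,W^Tg\bigr)_{\mathcal{H}^T}=\bigl(C^Tf^z,g\bigr)_{\mathcal{F}^T}$, so the left-hand side is already in the desired form. The whole content is therefore to evaluate the right-hand side $\bigl(\theta(\cdot,z),W^Tg\bigr)_{\mathcal{H}^T}=\int_0^T\overline{\theta(x,z)}\cdot\bigl(W^Tg\bigr)(x)\,dx$.

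For that I would use $\overline{\theta(x,z)}=\theta(x,\overline z)$ together with the finite speed of propagation, which guarantees that the state $\bigl(W^Tg\bigr)(\cdot,T)$ is supported in $(0,T)$; hence the integral over $(0,T)$ coincides with the integral over $(0,\infty)$ and, by the definition of the Fourier transform (\ref{Dirac_FT}), it equals the value at $\lambda=\overline z$ of $\bigl(FW^Tg\bigr)(\lambda)$. Substituting the spectral representations (\ref{Dirac_repr}), (\ref{Dirac_repr_1}) and using the Parseval identity exactly as in the derivation of (\ref{Fur_dir1}), (\ref{Fur_dir2}) and (\ref{WT_Fur_Dir}), the $x$- and $\lambda$-integrals collapse, and one is left with a single integral $\int_0^T\left(\begin{pmatrix}ie^{iz(T-s)}\\-ie^{-iz(T-s)}\end{pmatrix},\begin{pmatrix}g_1(s)\\g_2(s)\end{pmatrix}\right)ds$; the analytic continuation in $\lambda$ from $\mathbb{R}$ to the point $z$ is legitimate because the integrand in (\ref{WT_Fur_Dir}) is entire in $\lambda$.

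Comparing this with (\ref{CT_dir_sp}), the result is precisely $\bigl(C^Tj^z,g\bigr)_{\mathcal{F}^T}$ for the vector $\begin{pmatrix}j_1^z\\j_2^z\end{pmatrix}$ determined by the right-hand side of (\ref{Gener_ker2}); equivalently, $\begin{pmatrix}f_1^z\\f_2^z\end{pmatrix}$ itself satisfies (\ref{Gener_ker2}) against every $\begin{pmatrix}g_1\\g_2\end{pmatrix}\in\mathcal{F}^T$, hence strongly in $\mathcal{F}^T$. Since $C^T$ is a positive isomorphism of $\mathcal{F}^T$, equation (\ref{Gener_ker2}) has exactly one solution, and it coincides with both $\begin{pmatrix}f_1^z\\f_2^z\end{pmatrix}$ and $\begin{pmatrix}j_1^z\\j_2^z\end{pmatrix}$, which is the assertion. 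I expect the main obstacle to be the bookkeeping in this collapse step: one must keep careful track of the complex conjugations, which (as already in the Schr\"odinger case) interchange $z$ and $\overline z$ in the intermediate expressions, and one must justify the passage from the real axis to the complex point $z$ by analyticity, since $\theta(\cdot,z)$ itself is not square-integrable on the half-line and only its restriction to the controllable interval $(0,T)$ actually enters.
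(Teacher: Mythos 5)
Your proposal is correct and follows essentially the same route as the paper's own proof: pair (\ref{Control_Dir}) with $W^T\begin{pmatrix}g_1\\ g_2\end{pmatrix}$ in $\mathcal{H}^T$, identify the left side as $\bigl(C^Tf^z,g\bigr)_{\mathcal{F}^T}$, evaluate the right side via the spectral representation and analytic continuation of $FW^Tg$, and conclude from the arbitrariness of $g$ and the invertibility of $C^T$. Your explicit attention to the $z$ versus $\overline z$ conjugation bookkeeping is a welcome addition, since the paper itself is loose on exactly that point in (\ref{K5}) and (\ref{Gener_ker2}).
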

\begin{proof}
We take the equality (\ref{Control_Dir})
and multiply it in $\mathcal{H}^T$ by $W^T\begin{pmatrix}g_1\\
g_2\end{pmatrix}$ for some $\begin{pmatrix}g_1\\
g_2\end{pmatrix}\in \mathcal{F}^T$. As a result we get that
\begin{equation}
\label{K4}
\left(W^T\begin{pmatrix}f_1^z\\
f_2^z\end{pmatrix},W^T\begin{pmatrix}g_1\\
g_2\end{pmatrix}\right)_{\mathcal{H}^T}=\left(\theta(\cdot,z),W^T\begin{pmatrix}g_1\\
g_2\end{pmatrix}\right)_{\mathcal{H}^T}=\overline{\int_0^T \overline{W^T\begin{pmatrix}g_1\\
g_2\end{pmatrix}(x)}\theta(x,z)\,dx}.
\end{equation}
The r.h.s. of (\ref{K4}) can be evaluated as (see
\ref{WT_Fur_Dir}):
\begin{eqnarray}
\overline{\int_0^T \overline{W^T\begin{pmatrix}g_1\\
g_2\end{pmatrix}(x)}\theta(x,z)\,dx}\notag\\
=\overline{\int_0^T\overline{\int_{-\infty}^\infty \int_0^T \left(
\begin{pmatrix} ie^{i\lambda s} \\ -ie^{-i\lambda s} \end{pmatrix},\begin{pmatrix}
g_1(T-s)\\g_2(T-s)\end{pmatrix}\right)\,ds\theta(x,\lambda)\,d\rho(\lambda)}\theta(x,z)\,dx}
\notag\\
=\int_0^T{\left(i\begin{pmatrix}e^{i\lambda (T-s)}\\-e^{-i\lambda
(T-s)}\end{pmatrix},\begin{pmatrix}g_1(s)\\g_2(s)\end{pmatrix}\right)}\,ds,\label{K5}
\end{eqnarray}
From (\ref{K4}), (\ref{K5}) we get the desired equation
\ref{Gener_ker2}.
\end{proof}

After we found the reproducing kernel $J_z(\lambda)$ from
(\ref{Gener_ker2}), (\ref{J_Dir_for}), we can recover $E(\lambda)$
making use of Theorem \ref{TeorDB}.

We show that the $E(\lambda)$ will be the Hermite-Biehler
function. It follows from (\ref{E_ineq}),
\begin{eqnarray*}
\frac{\left|E(z)\right|^2-\left|E(\overline
z)\right|^2}{4\operatorname{Im}z}=J_z(z)=\int_0^T \begin{pmatrix} ie^{i\lambda(T-s)} \\
-ie^{-i\lambda(T-s)}\end{pmatrix}\begin{pmatrix} j^z_1(s) \\
j^z_2(s)\end{pmatrix}\,ds\\
=\int_0^T\begin{pmatrix}ie^{i\lambda (T-s)}\\
-ie^{-i\lambda (T-s)}\end{pmatrix}\left(\left(C^T\right)^{-1}\overline{\begin{pmatrix}ie^{i\lambda (T-s)}\\
-ie^{-i\lambda (T-s)}\end{pmatrix}}\right)\left(s\right)\,ds\\
=\left(\left(\left(C^T\right)^{-1}\right)^*\overline{\begin{pmatrix}ie^{i\lambda (T-s)}\\
-ie^{-i\lambda
(T-s)}\end{pmatrix}},\overline{\begin{pmatrix}ie^{i\lambda (T-s)}\\
-ie^{-i\lambda (T-s)}\end{pmatrix}} \right)_{\mathcal{F}^T}>0,
\end{eqnarray*}
where the last inequality follows from the positivity of $C^T$.

For positive $N$ we can consider the Dirac system on $(0,N)$ with
some self-adjoint boundary condition at $x=N:$
\begin{equation}
\label{Dirac_discr} \left\{\begin{array}l
JU_x+VU=zU, \quad 0<x<N, \\
U_1(0,z)=0, \quad \alpha U_1(N,z)+\beta U_2(N,z)=0 .
\end{array}\right.
\end{equation}
The (discrete) measure corresponding to (\ref{Schr_D}) we denote
by $d\rho_N(\lambda)$.
\begin{remark}
Due to the finite speed of wave propagation in the dynamical
system (\ref{d1}), equal to one, in all formulaes starting from
spectral representation of the solution (\ref{Dirac_repr}), we can
substitute the measure $d\rho(\lambda)$ by any measure
$d\rho_N(\lambda)$ with $N\geqslant T.$ In particular
\begin{equation*}
[F,G]_{B^T_D}= \int_{-\infty}^\infty
\overline{F(\mu)}G(\mu)\,d\rho(\mu)= \int_{-\infty}^\infty
\overline{F(\mu)}G(\mu)\,d\rho_N(\mu)
\end{equation*}
\end{remark}
If we know the De Branges space $B^T_D$, we can recover the
canonical system connected with this space using the De Branges
theory \cite{DBr,RR}, or recover the Dirac system (the matrix
potential $V$) using the Boundary Control method. For the details
see \cite{BM_Dir}.

\subsection{Special case: connection between Dirac and
Schr\"odinger De Branges spaces}

We consider the system (\ref{d1}) with the special matrix
potential
\begin{equation}
\label{Spec_matr}
V=\begin{pmatrix} 0&q\\q&0
\end{pmatrix},\quad \text{$q$ is differentiable, $q(0)=0$.}
\end{equation}
We differentiate (\ref{d1}) w.r.t. $t$ and $x$ to get
\begin{eqnarray*}
iu^1_{tt}+u^2_{xt}+qu^2_t=0,\\
iu^2_{tt}-u^1_{xt}+qu^1_t=0,\\
iu^2_{tx}-u^1_{xx}+\left(qu^1\right)_x=0
\end{eqnarray*}
On introducing the special potential
\begin{equation}
\label{spec_potent} Q(x)=q_x(x)+q^2(x),
\end{equation}
it is easy to see that $u^1$ satisfies the wave equation with this
potential:
\begin{equation}
\label{s1} u^1_{tt}-u^1_{xx}+Q(x)u^1=0,\quad x\geqslant
0,\,\,t\geqslant 0.
\end{equation}
Taking into account initial condition in (\ref{d1}) and the
equation $iu^1_{t}+u^2_{x}+qu^2=0$ at $t=0,$ we arrive at the
initial conditions
\begin{equation}
\label{s2} u^1(x,0)=u^1_t(x,0)=0.
\end{equation}
Counting the lat equality in (\ref{d1}), we get the boundary
condition
\begin{equation}
\label{s3} u^1(0,t)=f(t).
\end{equation}
Denote by $R_S$ the response operator (\ref{Response_sa}),
(\ref{react_rep}) for the wave equation (\ref{s1}), (\ref{s2}),
(\ref{s3}), and by $R_D$ the response operator (\ref{Dirac_react})
(\ref{Direc_react}) for the Dirac systems (\ref{d1}) with the
matrix potential (\ref{Spec_matr}). Everywhere below the
subscripts $S$ and $D$ being used refer the object to the
Schr\"odiner or Dirac system. For $R_S$ and $R_D$ we have by
(\ref{react_rep}) and (\ref{Direc_react}):
\begin{eqnarray*}
(R_S^f)(t)=(r_S*f)(t),\quad r_S(t)=-\delta'(t)+\widetilde r_S(s),\\
(R_Df)(t)=(r_D*f)(t),\quad r_D(t)=i\delta(t)+\widetilde r_D(t),
\end{eqnarray*}
here we separate singular and regular parts in integral kernels.
On the other hand, we can obtain from the Dirac system at $t=0$:
\begin{equation*}
u^1_x(0,t)=iu^2_t(0,t)+q(0)u^1(0,t)=i(R_Df)_t(t)+q(0)f(t)=i\left((R_Df)(t)\right)'.
\end{equation*}
Thus for arbitrary $f\in C_0^\infty(0,+\infty)$ we have that
\begin{equation*}
(R_Sf)(t)=(r_S*f)(t)=i(R_Df)'(t)=i(r_D'*f)(t).
\end{equation*}
The latter leads to the following relation between the kernels of
the response operators:
\begin{equation}
\label{resp_rav}
r_S(t)=ir'_D(t).
\end{equation}
The spectral representations (\ref{wave_repr}) and
(\ref{Dirac_repr}) implies
\begin{eqnarray}
\label{resp_sh_sp}
(R_Sf)(t)=u^f_x(0,t)=\int_{-\infty}^\infty\int_0^t\frac{\sin{\sqrt{\lambda}s}}{\sqrt{\lambda}}f(t-s)\,ds\,d\rho_S(\lambda),\\
(R_Df)(t)=u^2(0,t)=\int_{-\infty}^\infty\int_0^t e^{i\lambda
s}if(t-s)\,ds\,d\rho_D(\lambda).\label{resp_dir_sp}
\end{eqnarray}
Then from (\ref{resp_rav}), (\ref{resp_sh_sp}),
(\ref{resp_dir_sp}) follows the equality of the generalized
kernels of the response functions (see \cite{AM,MM3}):
\begin{equation*}
\int_{-\infty}^\infty-i\lambda e^{i\lambda
t}\,d\rho_D(\lambda)=\int_{-\infty}^\infty\frac{\sin{\sqrt{\lambda}t}}{\sqrt{\lambda}}\,d\rho_S(\lambda)
\end{equation*}
equating the real parts (the imaginary part in the l.h.s have to
be equal to zero), we get
\begin{equation*}
\int_{-\infty}^\infty\lambda \sin{\lambda
t}\,d\rho_D(\lambda)=\int_{-\infty}^\infty\frac{\sin{\sqrt{\lambda}t}}{\sqrt{\lambda}}\,d\rho_S(\lambda).
\end{equation*}
The latter equality yields
\begin{equation}
\label{Sh_Dir_mes}
\rho_S(\lambda)=\int_0^{\sqrt{\lambda}}\alpha^2\,d\rho_D(\alpha).
\end{equation}

How the De Branges spaces of Schr\"odinger and Dirac operators are
connected in our special situation? The De Branges spaces $B^T_D$,
$B^T_S$ corresponding Dirac and Scr\"odinger systems consist of
functions of the type (see (\ref{DB_Dir}), (\ref{DB2})):
\begin{eqnarray*}
F(\lambda)=\int_0^T\left(i\begin{pmatrix}e^{i\lambda
s}\\-e^{-i\lambda
s}\end{pmatrix},\begin{pmatrix}f(T-s)\\g(T-s)\end{pmatrix}\right)\,ds,\quad f,g\in L_2((0,T);\mathbb{C}),\\
G(\mu)=\int_0^T\frac{\sin{\sqrt{\mu}s}}{\sqrt{\mu}}h(T-s)\,ds,\quad
h\in L_2(0,T).
\end{eqnarray*}
Consider the subspace $B^T_s\subset B^T_D$, generated by the
vector functions of the special type:
$-\frac{1}{2}\begin{pmatrix}f\\f\end{pmatrix}$ with real-valued
$f\in L_2((0,T);\mathbb{R})$. In this case
\begin{equation}
\label{Spec_Direc} B^T_s:=\left\{ \int_0^T\sin{\lambda
s}f(T-s)\,ds\,\Big|\, f\in L_2(0,T)\right\}.
\end{equation}
We take $F\in B^T_s$ and evaluate the norm:
\begin{eqnarray*}
[F,F]_{B^T_D}=\frac{1}{4}\left(C^T_D\begin{pmatrix} f \\
f\end{pmatrix},\begin{pmatrix} f \\
f\end{pmatrix}\right)_{\mathcal{F}^T_D}\\
=\frac{1}{4}\int_{-\infty}^\infty
\overline{\int_0^T \left(\begin{pmatrix} ie^{i\lambda(T-s)} \\
-ie^{-i\lambda(T-s)}\end{pmatrix},\begin{pmatrix} f(s) \\ f(s)\end{pmatrix}\right)\,ds}{\int_0^T \left(\begin{pmatrix} ie^{i\lambda(T-t)} \\
-ie^{-i\lambda(T-t)}\end{pmatrix},\begin{pmatrix} f(t) \\
 f(t)\end{pmatrix}\right)\,ds}\, d\rho_D(\lambda)\\
= \int_{-\infty}^\infty\int_0^T\int_0^T {\sin{\lambda
s}f(T-s)}\sin{\lambda t}f(T-t)\,dt\,ds\,d\rho_D(\lambda)\\
= \int_{-\infty}^\infty\int_0^T\int_0^T {\frac{\sin{\sqrt{\lambda}
s}}{\sqrt{\lambda}}}f(T-s)\frac{\sin{\sqrt{\lambda}
t}}{\sqrt{\lambda}}f(T-t)\,dt\,ds\,d\rho_S(\lambda)=(C^T_Sf,f)_{\mathcal{F}^T_S}\\
=[\widetilde F,\widetilde F]_{B^T_S},\quad \widetilde
F(\lambda)=\int_0^T \frac{\sin{\sqrt{\lambda} s}}{\sqrt{\lambda}}
f(T-s)\,ds.
\end{eqnarray*}
Thus the Schr\"odinger De Branges $B^T_S$ of the system with the
potential (\ref{spec_potent}) are isometrically embedded into
Dirac De Branges space $B^T_D$ of the system with the matrix
potential (\ref{Spec_matr}) and $B^T_S$ is isometrically
isomorphic to the subspace $B^T_s$ of $B^T_D$ generated by the
functions of the special type (\ref{Spec_Direc}).

\section{Discrete Schr\"odinger operator}

For the real sequence $(b_n)$ we consider the discrete
Schr\"odinger operator in $l^2$ given by
\begin{equation}
\label{JM_oper} \left\{
\begin{array}l (H\phi)_n=\phi_{i+1}+\phi_{i-1}+b_n\phi_i,\quad n\geqslant 1,\\
(H\phi)_0=b_1\phi_0+\phi_1.
\end{array}
\right.
\end{equation}
Let $\varphi$ be the the solution to
\begin{equation}
\label{JM_spec_sol} \left\{
\begin{array}l \varphi_{i+1}+\varphi_{i-1}+b_n\varphi_i=z\varphi_i,\\
\varphi_0=0,\quad \varphi_1=1.
\end{array}
\right.
\end{equation}
We fix some $N\in \mathbb{N}$ and introduce the function
$E(z):=\varphi_N(z)-i\varphi_{N+1}(z)$ and show that it is a
Hermite-Biehler function. First we observe that
$\varphi_i(\overline z)=\overline{\varphi_i(z)}$. Then evaluating
$J_z$ in accordance with (\ref{repr_ker}):
\begin{eqnarray}
J_z(\xi)=\frac{\left(\overline{\varphi_N(z)}+i\overline{\varphi_{N+1}(z)}\right)\left({\varphi_N(\xi)}-i\varphi_{N+1}(\xi)\right)-
\left(\overline{\varphi_N(z)}-i\overline{\varphi_{N+1}(z)}\right)\left({\varphi_N(\xi)}+i\varphi_{N+1}(\xi)\right)}{2i(\overline
z-\xi)}\notag\\
=\frac{\left(\overline{\varphi_{N+1}(z)}\varphi_N(\xi)-{\varphi_{N+1}(\xi)}\overline{\varphi_N(z)}\right)}{\overline
z-\xi}.\label{JM_1}
\end{eqnarray}
Let us consider the equations
\begin{eqnarray*}
\overline{\varphi_{i+1}(z)}+\overline{\varphi_{i-1}(z)}+b_n\overline{\varphi_i(z)}=\overline{z}\overline{\varphi_i(z)},\\
{\varphi_{i+1}(\xi)}+{\varphi_{i-1}(\xi)}+b_n{\varphi_i(\xi)}={\xi}{\varphi_i(\xi)}.
\end{eqnarray*}
On multiplying first equation by $\varphi_i(\xi)$, second equation
by $\overline{\varphi_i(z)}$ and subtracting second from first, we
get
\begin{equation*}
\left(\overline{\varphi_{i+1}(z)}+\overline{\varphi_{i-1}(z)}\right)\varphi_i(\xi)-\left({\varphi_{i+1}(\xi)}+{\varphi_{i-1}(\xi)}\right)\overline{\varphi_i(z)}=\left(\overline
z -\xi\right)\overline{\varphi_i(z)}\varphi_i(\xi).
\end{equation*}
Summing up left and right hand sides of the previous equality from
$1$ to $N$, we get:
\begin{equation}
\label{JM_2} \left(\overline z
-\xi\right)\sum_{i=1}^N\overline{\varphi_i(z)}\varphi_i(\xi)=\varphi_{N+1}(z)\varphi_N(\xi)-{\varphi_{N+1}(\xi)}\overline{\varphi_N(z)}.
\end{equation}
Then from (\ref{JM_1}), (\ref{JM_2}) we see that
\begin{equation*}
J_z(\xi)=\sum_{i=1}^N\overline{\varphi_i(z)}\varphi_i(\xi),
\end{equation*}
and setting here $z=\xi$ we obtain
\begin{equation*}
0<\sum_{i=1}^N |\varphi_i(z)|^2=J_z(z)=\frac{|E(z)|^2-|E(\overline
z)|^2}{4\Im{z}}.
\end{equation*}
So $E$ is a Hermite-Biehler function. We can define De Branges
space $\widehat B^N_J$ based on this function. The opposite is
also true: if we have a De Branges space which comes from discrete
Schr\"odinger operator, one can recover corresponding canonical
system \cite{RR} by general technique \cite{DBr}.

For the same sequence $(b_n)$ we consider the dynamical system
with discrete time which is a natural analog of dynamical systems
governed by the wave equation with potential on a semi-axis:
\begin{equation}
\label{Jacobi_dyn} \left\{
\begin{array}l
u_{n,t+1}+u_{n,t-1}-u_{n+1,t}-u_{n-1,t}-b_nu_{n,t}=0,\quad n,t\in \mathbb{N}_0,\\
u_{n,-1}=u_{n,0}=0,\quad n\in \mathbb{N}, \\
u_{0,t}=f_t,\quad t\in \mathbb{N}_0.
\end{array}\right.
\end{equation}
By analogy with continuous problems, we treat the complex sequence
$f=(f_0,f_1,\ldots)\in \mathbb{C}^\infty$ as a \emph{boundary
control}. The solution to (\ref{Jacobi_dyn}) we denote by
$u^f_{n,t}$. In \cite{MM,MM1} the following representation have
been proved:
\begin{theorem}
The solution to (\ref{Jacobi_dyn}) admits the following
representation
\begin{equation}
\label{Jac_sol_rep} u^f_{n,t}=\prod_{k=0}^{n-1}
f_{t-n}+\sum_{s=n}^{t-1}w_{n,s}f_{t-s-1},\quad n,t\in
\mathbb{N}_0.
\end{equation}
where $w_{n,s}$ satisfies the Goursat problem
\begin{equation}
\label{Goursat} \left\{
\begin{array}l
w_{n,t+1}+w_{n,t-1}-w_{n+1,t}-w_{n-1,t}+b_nw_{n,t}=0,\quad n,s\in \mathbb{N}_0, \,\,s>n,\\
w_{n,n}=-\sum_{k=1}^n b_k,\quad n\in \mathbb{N},\\
w_{0,t}=0,\quad t\in \mathbb{N}_0.
\end{array}
\right.
\end{equation}
\end{theorem}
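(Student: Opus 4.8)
The plan is to treat (\ref{Jacobi_dyn}) as an explicit marching scheme in the discrete time $t$ and to prove the representation (\ref{Jac_sol_rep}) by reducing it, via the uniqueness of that scheme, to a verification that the right-hand side of (\ref{Jac_sol_rep}) solves the same problem. Solving the difference equation in (\ref{Jacobi_dyn}) for the top time level gives
\begin{equation*}
u_{n,t+1}=u_{n+1,t}+u_{n-1,t}+b_nu_{n,t}-u_{n,t-1},
\end{equation*}
so that the zero Cauchy data $u_{n,-1}=u_{n,0}=0$ together with the boundary values $u_{0,t}=f_t$ determine $u^f_{n,t}$ uniquely for all $n,t\in\mathbb{N}_0$. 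Running this recursion by induction on $t$ also yields the discrete finite speed of propagation $u^f_{n,t}=0$ for $n>t$, the exact analogue of the support condition in the continuous formula (\ref{wave_eqn_sol}); this is what makes the sum in (\ref{Jac_sol_rep}) finite and, under the standing convention $f_m=0$ for $m<0$, makes the leading term meaningful.

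Next I would record that the Goursat problem (\ref{Goursat}) is itself well posed: written as a one-step recursion in $n$, its difference equation lets the boundary row $w_{0,t}=0$ and the diagonal data $w_{n,n}=-\sum_{k=1}^n b_k$ propagate into the wedge $0\leqslant n\leqslant s$ and fix $w_{n,s}$ there uniquely, so the kernel appearing in (\ref{Jac_sol_rep}) exists and is the one described.

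The core of the argument is the substitution. Writing $\tilde u_{n,t}$ for the right-hand side of (\ref{Jac_sol_rep}) with this $w$, I would insert $\tilde u$ into the operator $u_{n,t+1}+u_{n,t-1}-u_{n+1,t}-u_{n-1,t}-b_nu_{n,t}$ and reorganize the outcome as a single linear combination of the control values $f_{t-m}$. The free-wave parts (the leading terms in the five shifted copies of $\tilde u$) telescope and leave only a single wavefront contribution proportional to $f_{t-n}$. After aligning the five convolution sums to a common summation index, the coefficient of each interior $f_{t-m}$ collapses to $w_{n,m}+w_{n,m-2}-w_{n+1,m-1}-w_{n-1,m-1}$ together with the $b_n$-term inherited from (\ref{Jacobi_dyn}), which is exactly the Goursat difference equation of (\ref{Goursat}) evaluated at $(n,m-1)$; hence every interior coefficient vanishes. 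Matching the coefficient of the wavefront term $f_{t-n}$ produces a first-order relation between $w_{n,n}$ and $w_{n-1,n-1}$ which, together with $w_{0,0}=0$, telescopes to the diagonal normalization $w_{n,n}=-\sum_{k=1}^n b_k$. Finally $w_{0,t}=0$ gives $\tilde u_{0,t}=f_t$, and the empty sums at $t=0,-1$ give the zero Cauchy data; thus $\tilde u$ solves (\ref{Jacobi_dyn}), and the uniqueness from the first step yields $\tilde u=u^f$.

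The step I expect to be delicate is precisely this coefficient bookkeeping. The five shifted sums carry different ranges of summation, so near the wavefront $s=n$ and near $s=t-1$ they overlap only partially, and one must track carefully which shifted terms survive at the endpoints. It is at the wavefront that the diagonal normalization $w_{n,n}=-\sum_{k=1}^n b_k$ — rather than the bare difference equation — is forced, so getting the boundary of the summation region right, and keeping the global sign convention consistent between (\ref{Jacobi_dyn}) and (\ref{Goursat}), is where the real care lies; the interior cancellation is then automatic.
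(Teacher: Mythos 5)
First, a point of comparison: the paper itself gives no proof of this theorem --- it is quoted from \cite{MM,MM1} --- so your argument can only be judged on its own terms. The overall strategy is the right one and is surely the intended one: (\ref{Jacobi_dyn}) is an explicit marching scheme in $t$, hence uniquely solvable, it has unit propagation speed, and the representation (\ref{Jac_sol_rep}) is verified by substituting the ansatz and matching the coefficients of the $f_{t-m}$. Two places where your sketch is thinner than it should be. (i) The well-posedness of the Goursat problem is not a ``one-step recursion in $n$'' starting from the column $w_{0,t}=0$: the difference equation is only available for $n\geqslant 1$, and solving it for $w_{n+1,s}$ requires the two previous columns, so your recursion never produces the first column. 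One either marches along characteristics, or --- more naturally --- defines $w_{n,s}:=u^{\delta}_{n,s+1}$ through the fundamental (delta-control) solution, obtains (\ref{Jac_sol_rep}) from linearity, causality and time-invariance (the solution map is a convolution in $t$), and then reads the Goursat problem off the equation for $u^{\delta}$. (ii) More seriously, the ``delicate bookkeeping'' you defer is precisely where the claim, as printed, fails.

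Carrying out your coefficient matching with $\tilde u_{n,t}=f_{t-n}+\sum_{s=n}^{t-1}w_{n,s}f_{t-s-1}$ inserted into the operator of (\ref{Jacobi_dyn}) (which carries $-b_nu_{n,t}$), the free-wave parts leave $-b_nf_{t-n}$, the total coefficient of the wavefront term $f_{t-n}$ is $w_{n,n}-w_{n-1,n-1}-b_n$, and the coefficient of an interior $f_{t-1-m}$ is $w_{n,m+1}+w_{n,m-1}-w_{n+1,m}-w_{n-1,m}-b_nw_{n,m}$. Hence the verification forces $w_{n,n}=+\sum_{k=1}^{n}b_k$ and a Goursat equation with $-b_nw_{n,m}$, \emph{not} the $w_{n,n}=-\sum_{k=1}^{n}b_k$ and $+b_nw_{n,t}$ printed in (\ref{Goursat}). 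A two-step sanity check confirms this: marching (\ref{Jacobi_dyn}) gives $u^f_{1,1}=f_0$ and $u^f_{1,2}=f_1+b_1f_0$, so $w_{1,1}=b_1$. The two displays are mutually consistent only if the dynamical equation carries $+b_nu_{n,t}$; one of them has a sign typo, and your assertion that the wavefront relation ``telescopes to $w_{n,n}=-\sum_{k=1}^{n}b_k$'' is exactly the step that does not survive the computation. The fix is trivial, but a proof claiming to derive (\ref{Goursat}) from (\ref{Jacobi_dyn}) as written is not complete until the signs are reconciled. (You are right, on the other hand, to read the misprinted leading term $\prod_{k=0}^{n-1}f_{t-n}$ simply as $f_{t-n}$.)
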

\begin{definition}
For $a,b\in l^\infty$ we define the convolution $c=a*b\in
l^\infty$ by the formula
\begin{equation*}
c_t=\sum_{s=0}^{t}a_sb_{t-s},\quad t\in \mathbb{N}
\end{equation*}
\end{definition}
By $\mathcal{F}^T$ we denote the \emph{outer space}, the space of
controls: $\mathcal{F}^T:=\mathbb{C}^T$, $f,g\in \mathcal{F}^T$,
$f=(f_0,\ldots,f_{T-1})$ with the inner product
$(f,g)_{\mathcal{F}^T}=\sum_{k=0}^{T-1}\overline{f_k}{g_k}$. As a
dynamical inverse data for (\ref{Jacobi_dyn}) we use the
\emph{response operator} which is a dynamical Dirichlet-to-Neumann
map: $R^T:\mathcal{F}^T\mapsto \mathcal{F}^T$ is defined by the
rule
\begin{equation*}
\left(R^Tf\right)_t=u^f_{1,t}, \quad t=1,\ldots,T.
\end{equation*}
By (\ref{Jac_sol_rep}):
\begin{eqnarray}
\label{R_def}
\left(R^Tf\right)_t=u^f_{1,t}=a_0f_{t-1}+\sum_{s=1}^{t-1}
w_{1,s}f_{t-1-s}
\quad t=1,\ldots,T.\\
\notag \left(R^Tf\right)=r*f_{\cdot-1}.
\end{eqnarray}
where the \emph{response vector} is the convolution kernel of the
response operator,
$r=(1,r_1,\ldots,r_{T-1})=(1,w_{1,1},w_{1,2},\ldots w_{1,T-1})$.

We introduce the \emph{inner space}, the space of states of the
dynamical system (\ref{Jacobi_dyn}) $\mathcal{H}^T:=\mathbb{C}^T$,
$h,l\in \mathcal{H}^T$, $h=(h_1,\ldots, h_T)$ with the inner
product $(h,l)_{\mathcal{H}^T}=\sum_{k=1}^T \overline{h_k}{l_k}$.
The \emph{control operator} $W^T:\mathcal{F}^T\mapsto
\mathcal{H}^T$ is defined by the rule
\begin{equation*}
W^Tf:=u^f_{n,T},\quad n=1,\ldots,T.
\end{equation*}
We notice that in \cite{MM,MM1} the authors used the real inner
space (and, consequently, the real outer space), but all the
results are valid for the complex controls as well. Everywhere
below, unless it is mentioned, we use the real outer and inner
spaces $\mathcal{F}^T$, $\mathcal{H}^T$. In \cite{MM} the authors
proved
\begin{theorem}
The control operator $W^T$ is an isomorphism between
$\mathcal{F}^T$ and $\mathcal{H}^T$.
\end{theorem}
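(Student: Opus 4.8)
The plan is to exploit the finite speed of propagation encoded in the solution representation (\ref{Jac_sol_rep}) to show that, in suitably ordered bases, $W^T$ is represented by a triangular matrix with nonzero diagonal. Since $\mathcal{F}^T=\mathbb{C}^T$ and $\mathcal{H}^T=\mathbb{C}^T$ both have dimension $T$, and $W^T$ is linear (the solution $u^f$ of (\ref{Jacobi_dyn}) depends linearly on the control $f$), it then suffices to prove that the corresponding $T\times T$ matrix is invertible, which can be read off directly rather than estimated.

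First I would write out $(W^Tf)_n=u^f_{n,T}$ explicitly using (\ref{Jac_sol_rep}): for $n=1,\dots,T$,
\begin{equation*}
(W^Tf)_n=f_{T-n}+\sum_{s=n}^{T-1}w_{n,s}\,f_{T-s-1},
\end{equation*}
where the coefficient in front of the wave-front term $f_{T-n}$ is the leading factor appearing in (\ref{Jac_sol_rep}), which for the discrete Schr\"odinger operator (\ref{JM_oper}) equals $1$ because all off-diagonal couplings equal $1$. In particular, for $n=T$ the memory sum is empty and $(W^Tf)_T=f_0$.

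Next I would reindex to expose the triangular structure. Setting $j=T-n$, the output component $(W^Tf)_{T-j}$ depends only on $f_0,\dots,f_j$: after the substitution $m=T-s-1$ the sum runs over $m=0,\dots,j-1$, so
\begin{equation*}
(W^Tf)_{T-j}=f_j+\sum_{m=0}^{j-1}w_{T-j,\,T-m-1}\,f_m,\qquad j=0,\dots,T-1.
\end{equation*}
Reading the state components in the order $n=T,T-1,\dots,1$ (equivalently $j=0,1,\dots,T-1$) against the control components $f_0,f_1,\dots,f_{T-1}$, this exhibits $W^T$ as a lower-triangular matrix with all diagonal entries equal to $1$. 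Hence $\det W^T=1\neq 0$, so $W^T$ is a bijective linear map between the $T$-dimensional spaces $\mathcal{F}^T$ and $\mathcal{H}^T$, i.e.\ an isomorphism.

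The only delicate point I anticipate is the index bookkeeping needed to see the triangular structure cleanly, together with confirming that the coefficient of $f_{T-n}$ is exactly the nonzero constant $1$ rather than a genuinely $n$-dependent factor; this is precisely where one uses that the off-diagonal coefficients of (\ref{JM_oper}) are all $1$, so that the leading product in (\ref{Jac_sol_rep}) collapses to $1$. Once unit-diagonal triangularity is established, invertibility is immediate and no analytic estimates are required, in contrast with the continuous Schr\"odinger and Dirac cases where the corresponding isomorphism is a genuinely functional-analytic statement.
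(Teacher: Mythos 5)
Your proof is correct: the representation (\ref{Jac_sol_rep}) (whose leading product collapses to $1$ for the discrete Schr\"odinger operator, since all off-diagonal couplings in (\ref{JM_oper}) equal $1$) exhibits $W^T$, in the reversed ordering of the state components, as a lower-triangular $T\times T$ matrix with unit diagonal, hence invertible between the finite-dimensional spaces $\mathcal{F}^T$ and $\mathcal{H}^T$. The paper itself gives no proof but cites \cite{MM}, where the argument is precisely this triangular (Volterra-type) structure of the control operator, so your approach coincides with the intended one.
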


According to \cite{AH,AT} there exist the spectral measure
$d\rho(\lambda)$ corresponding to (\ref{JM_oper}) with Dirichlet
condition $\phi_0=0$ such that for $u\in l^2$ the Fourier
transform $F: l^2\mapsto L_2(\mathbb{R},d\rho)$ is defined as
\begin{equation}
\label{JM_Four} (Fu)(\lambda)=\sum_{n=0}^\infty
u_k\varphi_k(\lambda)
\end{equation}
and the Parseval identity holds:
\begin{equation}
\label{JM_parseval} \sum_{k=0}^\infty u_kv_k=\int_{-\infty}^\infty
{(Fu)(\lambda)}(Fv)(\lambda)\,d\rho(\lambda).
\end{equation}
where
\begin{equation*}
u_k=\int_{-\infty}^\infty
(Fu)(\lambda)\varphi_k(\lambda)\,d\rho(\lambda)
\end{equation*}
Introduce the functions
\begin{equation*}
\left\{
\begin{array}l
\mathcal{T}_{t+1}+\mathcal{T}_{t-1}-\lambda_k \mathcal{T}_{t}=0,\\
\mathcal{T}_{0}=0,\,\, \mathcal{T}_1=1.
\end{array}
\right.
\end{equation*}
So $\mathcal{T}_k(2\lambda)$ are Chebyshev polynomials of the
second kind. In \cite{MM,MM1} the following spectral
representation for the solution to (\ref{Jacobi_dyn}) have been
derived:
\begin{equation}
\label{Jac_sol_spectr} u^f_{n,t}=\int_{-\infty}^\infty
\sum_{k=1}^t
\mathcal{T}_k(\lambda)f_{t-k}\varphi_n(\lambda)\,d\rho(\lambda)
\end{equation}
We put the following definition of the De Branges space,
associated with (\ref{JM_oper})
\begin{equation*}
B_J^T:=\left\{\left(Fu^f_{\cdot,T}\right)(\lambda)\,|\, f\in
\mathcal{F}^T\right\}.
\end{equation*}
We take $t=T$ in (\ref{Jac_sol_spectr}) and go over the Fourier
transform (\ref{JM_Four}). For real $\lambda$ we evaluate:
\begin{equation*}
\left(Fu^f_{\cdot,t}\right)(\lambda)=\sum_{n=0}^\infty
\int_{-\infty}^\infty \sum_{k=1}^T
\mathcal{T}_k(z)f_{T-k}\varphi_n(z)\,d\rho(z)\varphi_k(\lambda)=\sum_{k=1}^T
\mathcal{T}_k(\lambda)f_{T-k}.
\end{equation*}
Notice that for $\lambda\in \mathbb{C}$ we have the same formula
due to the analyticity of the l.h.s.  Thus we get the following
representation for $B_J^T$:
\begin{equation}
\label{DB2_JM} B_J^T:=\left\{\sum_{k=1}^T
\mathcal{T}_k(\lambda)f_{T-k}\,|\, f\in \mathcal{F}^T\right\}.
\end{equation}
The \emph{connecting operator}  $C^T: \mathcal{F}^T\mapsto
\mathcal{F}^T$ for (\ref{Jacobi_dyn}) is introduced via the
quadratic form:
\begin{equation}
\label{C_T_def} \left(C^T
f,g\right)_{\mathcal{F}^T}=\left(u^f_{\cdot,T},
u^g_{\cdot,T}\right)_{\mathcal{H}^T}=\left(W^Tf,W^Tg\right)_{\mathcal{H}^T},\,\,
C^T=\left(W^T\right)^*W^T.
\end{equation}
The fact that $C^T$ can be expressed in terms of the inverse data
is crucial in BC-method. The following theorem have been proved in
\cite{MM}:
\begin{theorem}
Connecting operator admits the representation in terms of
dynamical (response vector $r$) inverse data
\begin{equation}
\label{C_T_repr} C^T=C^T_{ij},\quad
C^T_{ij}=\sum_{k=0}^{T-\max{i,j}}r_{|i-j|+2k},\quad r_0=1.
\end{equation}
\begin{equation*}
C^T=
\begin{pmatrix}
r_0+r_2+\ldots+r_{2T-2} & r_1+r_3+\ldots+r_{2T-3} & \ldots &
r_T+r_{T-2} &
r_{T-1}\\
r_1+r_3+\ldots+r_{2T-3} & r_0+r_2+\ldots+r_{2T-4} & \ldots &
\ldots
&r_{T-2}\\
\cdot & \cdot & \cdot & \cdot & \cdot \\
r_{T-3}+r_{T-1}+r_{T+1} &\ldots & r_0+r_2+r_4 & r_1+r_3 & r_2\\
r_{T}+r_{T-2}&\ldots &r_1+r_3&1+r_2&r_1 \\
r_{T-1}& r_{T-2}& \ldots & r_1 &r_0
\end{pmatrix}
\end{equation*}
and spectral (spectral measure $d\rho$) inverse data:
\begin{equation}
\label{SP_mes_d} C^T_{l+1,m+1}=\int_{-\infty}^\infty
\mathcal{T}_{T-l}(\lambda)\mathcal{T}_{T-m}(\lambda)\,d\rho(\lambda),
\quad l,m=0,\ldots,T-1.
\end{equation}
and
\begin{equation*}
r_{k-1}=\int_{\infty}^\infty
\mathcal{T}_k(\lambda)\,d\rho(\lambda),\quad k\in \mathbb{N}.
\end{equation*}
\end{theorem}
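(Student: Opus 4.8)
The plan is to prove the two representations in sequence: first the spectral formula (\ref{SP_mes_d}) directly from Parseval, then the scalar relation $r_{k-1}=\int\mathcal{T}_k\,d\rho$, and finally to bridge the two via a product identity for the polynomials $\mathcal{T}_k$, which converts (\ref{SP_mes_d}) into the dynamical formula (\ref{C_T_repr}). The matrix display is then just a table of special cases of (\ref{C_T_repr}).

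First I would start from the definition (\ref{C_T_def}), $(C^Tf,g)_{\mathcal{F}^T}=(u^f_{\cdot,T},u^g_{\cdot,T})_{\mathcal{H}^T}$, and insert the Parseval identity (\ref{JM_parseval}) together with the spectral representation (\ref{Jac_sol_spectr}). Since the Fourier image of the state $u^f_{\cdot,T}$ is $\sum_{k=1}^T\mathcal{T}_k(\lambda)f_{T-k}$ (as already computed in the excerpt), reindexing by $j=T-k$ rewrites it as $\sum_{j=0}^{T-1}\mathcal{T}_{T-j}(\lambda)f_j$. Because the $\mathcal{T}_k$ are real polynomials and the controls are real, the quadratic form becomes
\begin{equation*}
\left(C^Tf,g\right)_{\mathcal{F}^T}=\sum_{j,m=0}^{T-1}f_j\,g_m\int_{-\infty}^\infty\mathcal{T}_{T-j}(\lambda)\mathcal{T}_{T-m}(\lambda)\,d\rho(\lambda),
\end{equation*}
and reading off the coefficient of $f_j g_m$ (using the symmetry of $C^T=(W^T)^*W^T$) gives exactly (\ref{SP_mes_d}).

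Next I would obtain the relation between the response vector and the measure. Taking $n=1$ in (\ref{Jac_sol_spectr}) and using $\varphi_1\equiv 1$ from (\ref{JM_spec_sol}) collapses the spectral representation of the boundary trace to $u^f_{1,t}=\sum_{k=1}^t f_{t-k}\int_{-\infty}^\infty\mathcal{T}_k(\lambda)\,d\rho(\lambda)$. Comparing this, term by term in the arbitrary control $f$, with the convolution representation (\ref{R_def}), $u^f_{1,t}=\sum_{s=0}^{t-1}r_s f_{t-1-s}$ (recall $r_0=1$), the substitution $s=k-1$ identifies $r_{k-1}=\int_{-\infty}^\infty\mathcal{T}_k(\lambda)\,d\rho(\lambda)$.

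Finally, the dynamical representation follows by turning each product $\mathcal{T}_{T-l}\mathcal{T}_{T-m}$ in (\ref{SP_mes_d}) into a sum of single $\mathcal{T}$'s. The key ingredient is the product identity
\begin{equation*}
\mathcal{T}_a(\lambda)\mathcal{T}_b(\lambda)=\sum_{k=0}^{\min(a,b)-1}\mathcal{T}_{|a-b|+2k+1}(\lambda),
\end{equation*}
which I would establish by induction on $\min(a,b)$ from the three-term recurrence $\mathcal{T}_{t+1}+\mathcal{T}_{t-1}=\lambda\mathcal{T}_t$ (the $\mathcal{T}_k$ being Chebyshev polynomials of the second kind, for which this is standard). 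Setting $a=T-l$, $b=T-m$, integrating against $d\rho$, noting $|a-b|=|l-m|$ and $\min(a,b)=T-\max(l,m)$, and replacing each $\int\mathcal{T}_{|l-m|+2k+1}\,d\rho$ by $r_{|l-m|+2k}$ via the relation just proved, yields $C^T_{l+1,m+1}=\sum_{k=0}^{T-\max(l,m)-1}r_{|l-m|+2k}$; with $i=l+1$, $j=m+1$ this is precisely (\ref{C_T_repr}), and evaluating at the extreme indices reproduces the displayed matrix. The main obstacle I anticipate is pinning down the product identity together with the index bookkeeping, since the correspondences $\min(a,b)-1\leftrightarrow T-\max(i,j)$ and $|a-b|+2k+1\leftrightarrow|i-j|+2k$ are exactly where off-by-one errors would corrupt every entry of the matrix.
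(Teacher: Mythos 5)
Your argument is correct, and all three index bookkeeping points you flag do check out: with $\mathcal{T}_a=U_{a-1}(\lambda/2)$ the classical linearization $U_{a-1}U_{b-1}=\sum_{k=0}^{\min(a,b)-1}U_{a+b-2-2k}$ gives exactly your $\mathcal{T}_a\mathcal{T}_b=\sum_{k=0}^{\min(a,b)-1}\mathcal{T}_{|a-b|+2k+1}$, and substituting $a=T-l$, $b=T-m$, $i=l+1$, $j=m+1$ turns $\min(a,b)-1$ into $T-\max(i,j)$ and $|a-b|+2k+1$ into $|i-j|+2k$ shifted by one in the subscript of $r$, reproducing (\ref{C_T_repr}) and the displayed matrix (e.g.\ $C^T_{TT}=r_0$, $C^T_{T-1,T-1}=r_0+r_2$, $C^T_{11}=r_0+\dots+r_{2T-2}$). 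Note, however, that the paper gives no proof at all here --- the theorem is quoted from \cite{MM} --- and the derivation there follows the standard Boundary Control route: the dynamical representation of $C^T$ is obtained purely from the discrete wave equation via a Blagoveshchenskii-type identity for $(u^f_{\cdot,t},u^g_{\cdot,s})_{\mathcal{H}}$, which expresses the quadratic form through the response operator $R^{2T}$ without ever invoking the spectral measure; the spectral formula (\ref{SP_mes_d}) is then a separate consequence of Parseval. Your route inverts this logic: you prove the spectral formula first and transport it to the dynamical one through the Chebyshev product identity and the relation $r_{k-1}=\int\mathcal{T}_k\,d\rho$. This is shorter and exploits the explicit polynomial structure, but it presupposes the spectral measure and the representation (\ref{Jac_sol_spectr}); the BC derivation is preferred in this context precisely because (\ref{C_T_repr}) is meant to be available from the dynamical data alone, before any spectral object has been constructed. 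As a proof of the theorem as stated, with the paper's standing hypotheses, your argument is complete.
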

In $B_J^T$ we introduce the scalar product by
\begin{equation}
\label{JM_scalprod}
[F,G]_{B^T_J}=\left(C^Tf,g\right)_{\mathcal{F}^T}.
\end{equation}
Since $C^T$ is a positive isomorphism, the space $B^T_J$ equipped
with the norm generated by (\ref{JM_scalprod}) is a Hilbert space.
We evaluate (\ref{JM_scalprod}) using (\ref{JM_parseval}):
\begin{equation*}
[F,G]_{B^T_J}=\left(u^f_{\cdot,T},u^g_{\cdot,T}\right)_{\mathcal{H}^T}=\int_{-\infty}^\infty
\overline{(Fu^f_{\cdot,T})(\lambda)}{(Fu^g_{\cdot,T})(\lambda)}\,d\rho(\lambda)=
\int_{-\infty}^\infty
\overline{F(\lambda)}{G(\lambda)}\,d\rho(\lambda).
\end{equation*}

We will be looking for the reproducing kernel in $B^T_J$ in the
form
\begin{equation}
\label{JM_reprker} J_z(\lambda)=\sum_{k=1}^T
\mathcal{T}_k(\lambda)j^z_{T-k},
\end{equation}
then by definition we should have for all $F\in B^T_J$ that
$[J_z,F]_{B^T_J}=F(z).$ The latter immediately implies that for
$z\in \mathbb{C}$
\begin{equation*}
[J_z,F]_{B^T_J}=\left(C^Tj^z,f\right)_{\mathcal{F}^T}=F(z)=\sum_{k=1}^T
\mathcal{T}_k(z)f_{T-k}=\left(\overline{\begin{pmatrix}\mathcal{T}_{T}(z)\\
\mathcal{T}_{T-1}(z)\\\cdot\\ \mathcal{T}_1(z)\end{pmatrix}},\begin{pmatrix}f_{0}\\
f_{1}\\\cdot\\ f_{T-1}\end{pmatrix}\right)_{\mathcal{F}^T}.
\end{equation*}
From where we get the following equation on $j^z$:
\begin{equation}
\label{JM_Krein} C^Tj^z=\overline{\begin{pmatrix}\mathcal{T}_{T}(z)\\
\mathcal{T}_{T-1}(z)\\\cdot\\ \mathcal{T}_1(z)\end{pmatrix}}.
\end{equation}

We set up the \emph{special control problem}: for $z\in
\mathbb{C}$ to find $j_z\in \mathcal{F}^T$ (specifically at this
point we need complex controls!) such that
\begin{equation}
\label{special_JM}
\left(W^Tj^z\right)_n=\varphi_n(z),\quad
n=1,\ldots,T.
\end{equation}
\begin{lemma}
The solution to the special control problem can be found as a
solution to (\ref{JM_Krein}).
\end{lemma}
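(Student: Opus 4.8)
The plan is to run the same three-step Boundary Control argument already used in the two preceding lemmas (for the Schr\"odinger equation and for the Dirac system): probe the special control $j^z$ against an arbitrary reachable state, push the test state back into the outer space by means of the identity $C^T=(W^T)^*W^T$, and then use the spectral representation to recognize the resulting pairing as the right-hand side of the Krein equation (\ref{JM_Krein}). Existence and uniqueness of a control solving (\ref{special_JM}) come for free: since the control operator $W^T$ is an isomorphism between $\mathcal{F}^T$ and $\mathcal{H}^T$, the element $j^z:=(W^T)^{-1}\varphi(z)$ is well defined, where $\varphi(z)=(\varphi_1(z),\dots,\varphi_T(z))$ is regarded as a state in $\mathcal{H}^T$.

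First I would take the identity $(W^Tj^z)_n=\varphi_n(z)$, $n=1,\dots,T$, and pair it in $\mathcal{H}^T$ with $W^Tg$ for an arbitrary $g\in\mathcal{F}^T$. By the definition (\ref{C_T_def}) of the connecting operator this gives
\begin{equation*}
\left(C^Tj^z,g\right)_{\mathcal{F}^T}=\left(W^Tj^z,W^Tg\right)_{\mathcal{H}^T}=\left(\varphi(z),u^g_{\cdot,T}\right)_{\mathcal{H}^T}=\sum_{n=1}^T\overline{\varphi_n(z)}\,u^g_{n,T}.
\end{equation*}
The whole content of the lemma is the evaluation of this finite sum. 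Since $\varphi_0(z)=0$ and, by the representation (\ref{Jac_sol_rep}) and the unit speed of propagation, the state is supported in $1\leqslant n\leqslant T$, the sum may be completed to the full series $\sum_{n=0}^\infty\overline{\varphi_n(z)}\,u^g_{n,T}$. Using $\overline{\varphi_n(z)}=\varphi_n(\overline z)$, this series is exactly the Fourier transform (\ref{JM_Four}) of the state $u^g_{\cdot,T}$ evaluated at the point $\overline z$. I then invoke the identity established just before (\ref{DB2_JM}), namely $\left(Fu^g_{\cdot,T}\right)(\lambda)=\sum_{k=1}^T\mathcal{T}_k(\lambda)g_{T-k}$; both sides are polynomials in $\lambda$, so the identity persists for complex argument, and its value at $\overline z$ (together with $\mathcal{T}_k(\overline z)=\overline{\mathcal{T}_k(z)}$, the $\mathcal{T}_k$ having real coefficients) exhibits $\left(C^Tj^z,g\right)_{\mathcal{F}^T}$ as one fixed linear functional of $g$.

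It remains to read this functional off as an equation for $C^Tj^z$. The order-reversing re-indexing $m=T-k$ rewrites the result as the $\mathcal{F}^T$-pairing of $g$ against the vector standing on the right-hand side of (\ref{JM_Krein}); since $g$ is arbitrary, this forces $C^Tj^z$ to equal that vector, which is exactly the Krein equation (\ref{JM_Krein}). Uniqueness is automatic, since $C^T=(W^T)^*W^T$ is a positive isomorphism of the finite-dimensional space $\mathcal{F}^T\cong\mathbb{C}^T$, so (\ref{JM_Krein}) has a single solution and it coincides with the control $j^z$ constructed above. In contrast with the continuous Schr\"odinger and Dirac cases there is no genuine analytic obstacle here: everything takes place in $\mathbb{C}^T$, the interchange of the finite $n$-sum with the spectral integral is trivial, and the only steps that require care are the completion of the sum using finite support and the careful bookkeeping of the complex conjugates and of the reversed index $k\mapsto T-k$ when passing between the representation $\sum_k\mathcal{T}_k(\lambda)g_{T-k}$ and the coordinate form of (\ref{JM_Krein}).
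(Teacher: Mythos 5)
Your argument is correct and follows the paper's proof essentially verbatim: you pair the control identity with $W^Tg$, use $C^T=(W^T)^*W^T$, and evaluate the resulting sum via the spectral representation (\ref{Jac_sol_spectr}), equivalently the Fourier-transform identity preceding (\ref{DB2_JM}). Your additions --- the existence and uniqueness remarks via the isomorphisms $W^T$ and $C^T$, the completion of the finite sum using finite propagation speed, and the explicit tracking of conjugates (your computation in fact produces the right-hand side of (\ref{JM_Krein}) up to the same complex conjugation that the paper itself silently drops in passing to (\ref{K8})) --- are refinements of the same route, not a different one.
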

\begin{proof}
We multiply (\ref{special_JM}) by $W^T g$, $g\in \mathcal{F}^T$ in
$\mathcal{H}^T$. As result we get that
\begin{equation}
\label{K7}
\left(C^Tj^z,g\right)_{\mathcal{F}^T}=\left(\varphi(\cdot,z),W^Tg\right)_{\mathcal{H}^T}=\overline{\sum_{n=1}^T\overline{\left(W^Tg\right)_n}\varphi_n(z)}.
\end{equation}
We evaluate the r.h.s. of the above equality using the spectral
representation (\ref{Jac_sol_spectr}):
\begin{eqnarray}
\overline{\sum_{n=1}^T\overline{\left(W^Tg\right)_n}\varphi_n(z)}=\overline{\overline{\sum_{n=1}^T\int_{-\infty}^\infty
\sum_{k=1}^T
\mathcal{T}_k(\lambda)g_{T-k}\varphi_n(\lambda)\,d\rho(\lambda)}\varphi_n(z)}\notag\\
=\sum_{k=1}^T \mathcal{T}_k(\lambda)g_{T-k}=\left(\overline{\begin{pmatrix}\mathcal{T}_{T}(z)\\
\mathcal{T}_{T-1}(z)\\\cdot\\ \mathcal{T}_1(z)\end{pmatrix}},\begin{pmatrix}g_{0}\\
g_{1}\\\cdot\\
g_{T-1}\end{pmatrix}\right)_{\mathcal{F}^T}.\label{K8}
\end{eqnarray}
From (\ref{K7})and (\ref{K8}) the statement of the lemma follows.
\end{proof}

The positivity of $C^T$ yields the function $E$ to be from
Hermite-Biehler class: from (\ref{JM_reprker}), (\ref{JM_Krein})
we easily get:
\begin{eqnarray*}
\frac{\left|E(z)\right|^2-\left|E(\overline
z)\right|^2}{4\operatorname{Im}z}=J_z(z)=\left(\overline{\begin{pmatrix}\mathcal{T}_{T}(z)\\
\mathcal{T}_{T-1}(z)\\\cdot\\ \mathcal{T}_1(z)\end{pmatrix}},\left(C^T\right)^{-1}\overline{\begin{pmatrix}\mathcal{T}_{T}(z)\\
\mathcal{T}_{T-1}(z)\\\cdot\\ \mathcal{T}_1(z)\end{pmatrix}}
\right)_{\mathcal{F}^T}>0
\end{eqnarray*}

For any positive $N$ we can consider the discrete Schr\"odinger
operator with some self-adjoint boundary condition at $n=N:$
\begin{equation}
\label{JM_spec_int} \left\{
\begin{array}l \varphi_{i+1}+\varphi_{i-1}+b_n\varphi_i=z\varphi_i,\\
\varphi_0=0,\quad \alpha\varphi_{N+1}+\beta\varphi_N=0.
\end{array}
\right.
\end{equation}
The (discrete) measure corresponding to (\ref{JM_spec_int}) we
denote by $d\rho_N(\lambda)$.
\begin{remark}
Due to the finite speed of propagation in the dynamical system
(\ref{Jacobi_dyn}), in all formulaes starting from spectral
representation of the solution (\ref{Jac_sol_spectr}), we can
substitute the measure $d\rho(\lambda)$ by any measure
$d\rho_N(\lambda)$ with $N> T.$ In particular
\begin{equation*}
[F,G]_{B^T_J}= \int_{-\infty}^\infty
\overline{F(\mu)}G(\mu)\,d\rho(\mu)= \int_{-\infty}^\infty
\overline{F(\mu)}G(\mu)\,d\rho_N(\mu)
\end{equation*}
\end{remark}
So, having constructed reproducing kernel $J_z$ by
(\ref{JM_Krein}), (\ref{JM_reprker}), by Theorem \ref{TeorDB} we
can recover the Hermite-Biehler function $E$, the space $B^T_J$ is
based on. Having in hands De Branges space $B^T_J$, one can
recover the underlying canonical system using the general approach
\cite{DBr}, or one can use the Boundary Control method for
discrete Schrodinger operator as it described in \cite{MM,MM1}.

\noindent{\bf Acknowledgments}

The research of Victor Mikhaylov was supported in part by NIR
SPbGU 11.38.263.2014 and RFBR 14-01-00535. Alexandr Mikhaylov was
supported by RFBR 14-01-00306; A. S. Mikhaylov and V. S. Mikhaylov
were partly supported by VW Foundation program "Modeling,
Analysis, and Approximation Theory toward application in
tomography and inverse problems." The authors are deeply indebted
to Prof. R.V. Romanov and Prof. M.I. Belishev for valuable
discussions.

\end{document}